\documentclass[]{article}
\usepackage{amsfonts,amsmath,amssymb,amsthm,mathtools,mathrsfs}
\usepackage{enumitem}
\usepackage{abstract}
\usepackage[a4paper, total={6.5in, 9in}]{geometry}

\theoremstyle{definition}
\newtheorem{theorem}{Theorem}
\newtheorem{lemma}{Lemma}
\newtheorem{proposition}{Proposition}
\newtheorem{corollary}{Corollary}
\newtheorem*{definition*}{Definition}
\newtheorem{remark}{Remark}

\newtheorem{example}{Example}

\usepackage[backend=biber,style=numeric]{biblatex}
\addbibresource{paper.bib}

\begin{document}

\title{Perspicacious $l_p$ Norm Parameters 
}

\author{Christopher O'Neill, Vadim Ponomarenko, Eric Ren}

\maketitle

\begin{abstract}
    Fix $t\in [1,\infty]$. Let $S$ be an atomic commutative semigroup and, for all $x\in S$, let $\mathscr{L}_t(S):=\{\|f\|_t:f\in Z(x)\}$ be the "$t$-length set" of $x$ (using the standard $l_p$-space definition of $\|\cdot\|_t$). The $t$-Delta set of $x$ (denoted $\Delta_t(S)$) is the set of gaps between consecutive elements of $\mathscr{L}_t(S)$; the Delta set of $S$ is then defined by $\bigcup\limits_{x\in S} \Delta_t(S)$. Though all existing literature on this topic considers the $1$-Delta set, recent results on the $t$-elasticity of Numerical Semigroups (Behera et. al.) for $t\neq 1$ have brought attention to other invariants, such as the $t$-Delta set for $t\neq 1$, as well. Here we characterize $\Delta_t(S)$ for all numerical semigroups $\langle a_1,a_2\rangle$ and all $t\in(1,\infty)$ outside a small family of extremal examples. We also determine the cardinality and describe the distribution of that aberrant family. 
\end{abstract}

\section{Introduction}

A \emph{numerical semigroup} is a cofinite additive subsemigroup of $(\mathbb{N},+)$. Such semigroups are often denoted as an $\mathbb{N}$-semimodule generated by a set of irreducibles $a_1,...,a_k$ (usually written in increasing order), i.e., $S=\langle a_1,...,a_k\rangle = \left\{\sum\limits_{i=1}^k \lambda_ia_i:\lambda_i\in \mathbb{N}_0\right\}$. Most elements of these semigroups admit nonunique factorization into the semigroup's irreducibles; for example, we may express $60\in \langle 6,9,20\rangle$ as $0\cdot 6+0\cdot 9+3\cdot 2$ or as $4\cdot 6+4\cdot 9+0\cdot 20$. For all $x$ in a numerical semigroup $S=\langle a_1,...,a_k\rangle$, the set $\left\{(\lambda_1,...,\lambda_k):\sum\limits_{i=1}^k \lambda_ia_i=x;~\lambda_i\in \mathbb{N}_0\right\}$ is denoted by $Z(x)$ and referred to as the \emph{factorization set} of $x$. So, returning to the previous example, $(0,0,3)$ and $(4,4,0)$ are factorizations of $60$ and thus lie in $Z(60)$.

We may think of factorization sets of elements of $S=\langle a_1,...,a_k\rangle$ as equivalence classes within $k$-dimensional $l_p$-space. Invoking the usual norm, for all $t\in [1,\infty]$ and $x\in S$, we define the \emph{$t$-length set of $x$} by $\mathscr{L}_t(x)=\{\|f\|_t:f\in Z(x)\}$. Abundant literature analyses nvariants derived from the $\mathscr{L}_1$ sets of semigroups \cite{bible}. Of particular note is the $1$-\emph{delta set} of an element $x\in S$, denoted by $\Delta_1(x)$ and indicating the set of differences between consecutive elements of $\mathscr{L}_1(x)$ \cite{kaplan_2017_delta}. The \emph{$1$-delta set} of a semigroup is then defined by $\bigcup\limits_{x\in S}\Delta_1(x)$. Researchers have expended considerable effort computing this invariant for numerical semigroups \cite{kaplan_2017_delta}.

Recently, Behera et. al. \cite{ateam} computed a different norm-derived invariant, the $t$-elasticity, for all numerical semigroups and all norm parameters $t\in (1,\infty]$. This result has sparked interest in the $t$-Delta set for other norm parameters besides $1$. However, the $t$-norm for $t\neq 1$ is real-valued, rather than $\mathbb{N}$-valued, on integer vectors. Thus, the $t$-Delta set for $t\neq 1$ is extraordinarily complicated and will likely elude full characterization indefinitely. Therefore, we pursue the less lofty goal of partially characterizing these sets by determining their boundaries and limit points.

Preliminary results by Cyrusian et. al \cite{Ducks} indicate that for numerical semigroups of the form $\langle a_1,a_2\rangle$ and norm parameters $t\in [1,\infty]$, $\Delta_t(S)\subseteq [0,a_2]$. We discover that for some $t\in (1,\infty)$, $\Delta_t(S)$ is in fact dense in $[0,a_k]=[0,a_2]$. Let $t\in (1,\infty)$ be \emph{dull} if this is the case; otherwise, let $t$ be \emph{perspicacious}. Let $\mathcal{P}$ be the set of perspicacious $t$. Our main result (Theorem \ref{main}) regarding $\mathcal{P}$ is that it is countable, but dense in some ray $(a,\infty)$. A notable corollary (corollaries \ref{FLT}) is that if $t\in \mathbb{Q}\cap (2,\infty)$, then $t$ is dull.

We organize the paper leading up to Theorem \ref{main} as follows. Section 2 introduces two useful tools, the $\mu$ and $r_0$ functions, as well as some relevant properties. Section 3 shows that if $t$ meets a certain condition, $\Delta_t(S)$ is dense in an initial subinterval of $[0,a_2]$ (Proposition \ref{denselow}). Section 4 shows that in all cases, $\Delta_t(S)$ is dense in the rest of $[0,a_2]$ (Proposition \ref{densehigh}). Section 5 shows that some $t$ failing the condition from Section 3 are, in fact, perspicacious (Proposition \ref{edgecase}). Section 6 scrutinizes the set of $t$ failing the condition from Section 3. Finally, section 7 consolidates these intermediate results into the main theorem and corollary and remarks on open work.

\section{The Functions $\mu$, $r_0$, and $P$}

\subsection{$\mu$}
    The $\mu$ function proves invaluable for understanding $\mathscr{L}_t(x)$ across all $t$ and $x\in S$.

    \begin{definition*}
    \begin{enumerate}
        \item Let $\mu:[0,1]\times [1,\infty]\rightarrow \mathbb{R}$ be defined by $\mu(r,t)=\left\| \left( \frac{1-r}{a_1},\frac{r}{a_2} \right) \right\|_t$.
        \item For all fixed $t_0$, let $\mu_{t_0}:[0,1]\rightarrow \mathbb{R}$ be defined by $\mu_{t_0}(r)=\mu(r,t_0)$.
    \end{enumerate}

    \end{definition*}
 Observe that for all $x\in S$, the set equality $\text{range}(x\mu_t) = \left\{ \|\left(m,n\right)\|_t:ma_1+na_2=x;m,n\in \mathbb{R} \right\}$ holds. Because the right hand side contains $\left\{ \|\left(m,n\right)\|_t:ma_1+na_2=x;m,n\in \mathbb{N} \right\}=\mathscr{L}_t(x)$, the function $\mu$ is an object global across a semigroup that characterizes $\mathscr{L}_t(x)$ for all $t,x$. As such, we make use of properties of $\mu$ in the analysis of the sets $\mathscr{L}_t(x)$ across $S$. The following lemma collects relevant results.

\begin{lemma}
    For all $t\in [1,\infty]$:
    \begin{enumerate}
        \item $\frac{1}{a_1}=\mu_t(0)>\mu_t(1)=\frac{1}{a_2}$
        \item $\mu_t$ is smooth.
        \item If $t\neq 1$, then $\lim\limits_{r\rightarrow 0^+}\mu_t'(r)=-\frac{1}{a_1}$
        \item If $t\neq 1$, then $\lim\limits_{r\rightarrow 1^-}\mu_t'(r)=\frac{1}{a_2}$
        \item If $t\neq 1$, then $\mu_t'(r)$ is strictly increasing in $r$.

    \end{enumerate}

    Additionally, $\mu(r,t)$ is continuous in $t$.
\label{muprops}
\end{lemma}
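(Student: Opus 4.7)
The approach hinges on the closed-form expression $\mu_t(r) = \bigl((1-r)^t/a_1^t + r^t/a_2^t\bigr)^{1/t}$ for $t \in [1,\infty)$, with $\mu_\infty(r) = \max\bigl((1-r)/a_1,\, r/a_2\bigr)$ at the right endpoint. Items (1), (3), (4) will be direct computations from this formula; (5) will reduce to strict convexity of $\|\cdot\|_t$; (2) will be a standard smoothness-of-composition argument; and the final continuity claim will reduce to the pointwise continuity of $t \mapsto \|x\|_t$ for fixed $x$.

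Items (1), (3), (4) are essentially bookkeeping. For (1) I evaluate the formula at $r=0$ and $r=1$, noting that $a_1 < a_2$ by convention. For (3) and (4), fix $t \in (1,\infty)$ and differentiate implicitly in $\mu_t(r)^t = (1-r)^t/a_1^t + r^t/a_2^t$ to obtain
\[
\mu_t'(r) \;=\; \mu_t(r)^{1-t}\left(\frac{r^{t-1}}{a_2^t} - \frac{(1-r)^{t-1}}{a_1^t}\right).
\]
As $r\to 0^+$, $\mu_t(r)^{1-t}\to a_1^{t-1}$, $r^{t-1}\to 0$ (since $t>1$), and $(1-r)^{t-1}\to 1$, yielding $\mu_t'(r)\to -1/a_1$; the $r\to 1^-$ limit is symmetric and gives $1/a_2$.

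For (5), I will invoke strict convexity of the $l_t$ norm on $\mathbb{R}^2$ for $t \in (1,\infty)$: $\|\lambda u + (1-\lambda)v\|_t < \lambda\|u\|_t + (1-\lambda)\|v\|_t$ whenever $u$ and $v$ are linearly independent and $\lambda\in(0,1)$. The affine map $L(r) := \bigl((1-r)/a_1, r/a_2\bigr)$ sends distinct $r_1,r_2 \in [0,1]$ to linearly independent vectors, as a direct computation yields $\det\bigl(L(r_1)\,\big|\,L(r_2)\bigr) = (r_2-r_1)/(a_1a_2) \neq 0$. Hence $\mu_t = \|\cdot\|_t \circ L$ is strictly convex, so $\mu_t'$ is strictly increasing. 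For continuity in $t$, on $[1,\infty)$ the joint continuity of $(r,t)\mapsto \mu(r,t)$ is immediate from the closed-form expression, while continuity at $t=\infty$ follows from the standard fact that $\|x\|_t \to \|x\|_\infty$ for every fixed $x$.

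The one place I anticipate friction is (2). On the open interval $(0,1)$ the inner sum is smooth in $r$ and bounded away from $0$, so composition with $u\mapsto u^{1/t}$ preserves smoothness, giving $\mu_t \in C^\infty(0,1)$. At the boundary, however, $r^t$ is only $C^{\lfloor t\rfloor}$ when $t\notin\mathbb{Z}$, so strictly speaking $\mu_t$ need not be $C^\infty$ on the closed interval $[0,1]$ for non-integer $t$. I read the lemma's "smooth" as meaning $C^\infty$ on the open interval with one-sided first derivatives at the endpoints supplied by (3) and (4), which is all that later sections require; a rigorous rewrite of (2) would simply replace "smooth" with "$C^\infty$ on $(0,1)$ and $C^1$ on $[0,1]$".
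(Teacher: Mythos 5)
Your arguments for items (1), (3), (4), and the continuity in $t$ track the paper's closely: all come down to evaluating the closed form and (for (3)--(4)) computing $\mu_t'$; your implicit differentiation of $\mu_t(r)^t$ is simply a tidier route to the same derivative formula the paper obtains by the quotient rule (one can check the two expressions agree after simplification), and both treatments of continuity defer to the standard pointwise convergence $\|x\|_t \to \|x\|_\infty$.

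Your proof of (5) is a genuine departure and an improvement. The paper establishes monotonicity of $\mu_t'$ by writing down a CAS-derived formula for $\mu_t''$ and observing its sign, whereas you invoke strict convexity of $\|\cdot\|_t$ together with the determinant computation $\det\bigl(L(r_1)\,\big|\,L(r_2)\bigr) = (r_2-r_1)/(a_1a_2)$, which shows the affine path $L(r) = \bigl((1-r)/a_1,\, r/a_2\bigr)$ never hits collinear points. This gives strict convexity of $\mu_t = \|\cdot\|_t \circ L$ directly, with no opaque symbolic computation; it also makes clear exactly what structural fact is being used. One small cleanup: the inequality $\|\lambda u + (1-\lambda)v\|_t < \lambda\|u\|_t + (1-\lambda)\|v\|_t$ for $\lambda \in (0,1)$ holds whenever $u$ and $v$ are not nonnegative scalar multiples of one another; linear independence is a (slightly stronger) sufficient condition, so your determinant check does the job.

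Your caveat about item (2) is a legitimate catch, not overcaution. For non-integer $t$, the map $r \mapsto r^t$ is only $C^{\lfloor t\rfloor}$ at $r = 0$ (and symmetrically $(1-r)^t$ at $r=1$), and no cancellation rescues $\mu_t$: it is $C^\infty$ on $(0,1)$ and $C^1$ on $[0,1]$, but not $C^\infty$ on the closed interval unless $t$ is an integer. The paper's one-line justification (``a composition of smooth functions'') glosses over exactly this endpoint issue. Where I would push back is on your remark that $C^\infty$ on $(0,1)$ plus $C^1$ on $[0,1]$ is ``all that later sections require.'' The paper's Lemma 2 applies Taylor's theorem to obtain a constant $k_t$ with $|\mu_t(r_1) + (r_2-r_1)\mu_t'(r_1) - \mu_t(r_2)| \le k_t(r_2-r_1)^2$ uniformly over $r_1, r_2 \in [0,1]$, which needs $\mu_t''$ bounded on $[0,1]$. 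From the paper's own formula for $\mu_t''$, the factor $\left(\frac{r}{a_2}\right)^{t-2}$ blows up as $r \to 0^+$ when $1 < t < 2$, so $\mu_t''$ is unbounded there and the uniform $k_t$ does not exist in the form stated. For $t \ge 2$ one does get $\mu_t \in C^2([0,1])$ with bounded second derivative and Lemma 2 is fine, but the $1 < t < 2$ range needs either a restriction to a compact subinterval of $(0,1)$ away from the endpoints (which the later density arguments can likely accommodate) or a weaker remainder bound. Worth flagging explicitly rather than asserting the $C^1$ reading suffices downstream.
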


\begin{proof}

    As claimed, $\mu_t(0)=\left\|\left(\frac{1}{a_1},0\right)\right\|_t=\frac{1}{a_1}$ and $\mu_t(1)=\left\|\left(0,\frac{1}{a_2}\right)\right\|_t=\frac{1}{a_2}$. Additionally, $\mu_t$ is a composition of smooth functions, so it is smooth. Finally, it is known that the $l_p$ norms are continuous in $p$ (\cite{lpspace}), so $\mu(r,t)$ is continuous in $t$.

     The properties pertaining to $\mu_t'$ require more effort. Suppose $t\neq 1$. Observe that
     
     $$ \mu_t'(r)=\frac{\left(\left(\frac{1-r}{a_1}\right)^t+(\frac{r}{a_2})^t\right)^{1/t}\left( \frac{-1}{a_1}(\frac{1-r}{a_1})^{t-1}+\frac{1}{a_2}(\frac{r}{a_2})^{t-1}\right)}{(\frac{1-r}{a_1})^t+(\frac{r}{a_2})^t}$$. $$\text{Substituting }\mu_t(r)=\left(\left(\frac{1-r}{a_1}\right)^t+(\frac{r}{a_2})^t\right)^{1/t}\text{ and factoring }\frac{1-r}{a_1}\text{ gives}$$

     \begin{equation} \frac{\mu_t(r)\left(\frac{1-r}{a_1}\right)^{t-1}\left(\frac{-1}{a_1}+\frac{1}{a_2}\left(\frac{ra_1}{(1-r)a_2}\right)^{t-1}\right)}{\left(\frac{1-r}{a_1}\right)^t\left(1+\left(\frac{ra_1}{(1-r)a_2}\right)^t\right)}=\mu_t(r)\left(\frac{a_1}{1-r}\right)\left(\frac{\frac{-1}{a_1}+\frac{1}{a_2}\left(\frac{ra_1}{(1-r)a_2}\right)^{t-1}}{1+\left(\frac{ra_1}{(1-r)a_2}\right)^t}\right)\label{mutprime}\end{equation}

     As $r$ approaches $0$, this approaches $\mu_t(0)a_1\frac{-1}{a_1}=-\mu_t(0)$, which, as shown, equals $-\frac{1}{a_1}$.

     Additionally, by factoring $\frac{ra_1}{(1-r)a_2}$ from equation \ref{mutprime} we obtain

     $ \mu_t(r)\frac{a_2}{r}\left(\frac{\frac{-1}{a_1}\left(\frac{(1-r)a_2}{a_1}\right)^{t-1}+\frac{1}{a_2}}{\left(\frac{(1-r)a_2}{ra_1}\right)^t+1}\right) $, which, as $r$ approaches $1$, approaches $\mu_t(1)a_2\frac{1}{a_2}=\mu_t(1)$. As shown, this equals $\frac{1}{a_2}$.

     Finally, the following expression for $\mu_t''(r)$ (derived by a CAS) is positive for $t>1$, as $r\in [0,1]$.

     $$\frac{\left(t-1\right)\left(\frac{1-r}{a_1}\right)^{t-2}\left(\frac{r}{a_2}\right)^{t-2}\left(\left(\frac{1-r}{a_1}\right)^t+\left(\frac{r}{a_2}\right)^t\right)^{1/t-2}}{a_1^2a_2^2} $$

     Therefore, $\mu_t'$ is strictly increasing for $t>1$.
\end{proof}

Notably, smoothity enables the following linear approximation of the effect of small perturbations on the length of a factorization:

\begin{lemma}
    For all $t\in (1,\infty)$, there exists $k_t$ such that for all $x\in S$ and $(m,n),(m,n)+\left(-\frac{\delta}{a_1},\frac{\delta}{a_2}\right)\in \{(m,n):ma_1+na_2=x;~m,n\in \mathbb{R}\}$, the following inequality holds: $$\left|\|(m,n)\|_t+\delta\mu_t'\left(\frac{na_2}{x}\right)-\left\|(m,n)+\left(\frac{\delta}{a_1},-\frac{\delta}{a_2}\right)\right\|_t\right|< k\frac{\delta^2}{x}$$

    \label{approx}
\end{lemma}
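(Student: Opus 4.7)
The plan is to recognize the claimed inequality as a second-order Taylor estimate for the one-variable function $\mu_t$. Parameterize the line $\{(m,n):ma_1+na_2=x\}$ by $r=na_2/x$, so that $\|(m,n)\|_t=x\,\mu_t(r)$ directly from the definition of $\mu_t$. Any admissible perturbation of $(m,n)$ in the direction $(\pm 1/a_1,\mp 1/a_2)$ stays on the line and corresponds to a shift $r\mapsto r\pm\delta/x$ in the parameter. After absorbing the sign of $\delta$, the claim reduces to showing
\[
\bigl|x\mu_t(r)+\delta\mu_t'(r)-x\mu_t(r+\delta/x)\bigr|<k_t\frac{\delta^2}{x}.
\]

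The next step is to apply Taylor's theorem with Lagrange remainder to $\mu_t$ about $r$: since Lemma \ref{muprops} guarantees that $\mu_t$ is smooth, for some $\xi$ between $r$ and $r+\delta/x$ we have
\[
\mu_t(r+\delta/x)=\mu_t(r)+(\delta/x)\mu_t'(r)+\tfrac{1}{2}(\delta/x)^2\mu_t''(\xi).
\]
Multiplying through by $x$ and rearranging produces the desired expression with error exactly $\tfrac{1}{2}|\mu_t''(\xi)|\,\delta^2/x$, so it suffices to bound $|\mu_t''|$ uniformly by some constant $2k_t$ on the relevant portion of $[0,1]$.

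The main obstacle is precisely this uniform bound on $\mu_t''$. Using the explicit formula derived in the proof of Lemma \ref{muprops}, $\mu_t''$ is continuous and strictly positive on $(0,1)$. For $t\ge 2$ the endpoint factors $((1-r)/a_1)^{t-2}$ and $(r/a_2)^{t-2}$ extend continuously to $[0,1]$, so $\mu_t''$ attains a finite maximum there and we may simply set $k_t=\tfrac{1}{2}\max_{[0,1]}\mu_t''$. The trickier regime is $t\in(1,2)$, in which $\mu_t''(r)$ diverges like $r^{t-2}$ as $r\to 0^+$ and $(1-r)^{t-2}$ as $r\to 1^-$, so no pointwise bound is available all the way up to the boundary. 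Here I would exploit the feasibility constraint (which confines both $r$ and $r+\delta/x$ to $[0,1]$) together with the fact that $\mu_t''$ is integrable near each endpoint, replacing the pointwise Lagrange remainder by the integral form of Taylor's theorem in order to absorb the endpoint blow-up into a single $t$-dependent constant $k_t$. This endpoint analysis is the heart of the proof.
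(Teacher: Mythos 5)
Your reduction (homogeneity plus a uniform second-order Taylor bound for $\mu_t$ on $[0,1]$) is exactly the paper's argument, and for $t\ge 2$ your choice $k_t=\tfrac12\sup_{[0,1]}\mu_t''$ finishes it just as the paper intends. The gap is in the part you defer to "endpoint analysis" for $t\in(1,2)$: that step cannot be completed as envisioned, because the quadratic error rate is genuinely unavailable near $r=0$ and $r=1$. The integral form of Taylor's theorem gives remainder $\int_{r}^{r+h}(r+h-s)\mu_t''(s)\,ds$ with $h=\delta/x$, and since $\mu_t''(s)\asymp s^{t-2}$ near $0$ (and $\asymp(1-s)^{t-2}$ near $1$), this remainder is of order $h^{t}$, not $h^{2}$; integrability tames the singularity but does not restore the quadratic rate. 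Concretely, take $(m,n)=(x/a_1,0)$, fix $\delta=1$, and let $x\to\infty$: since $\mu_t(h)-\mu_t(0)-h\mu_t'(0)\sim c\,h^{t}$ with $c>0$, the left-hand side of the lemma is $\asymp x\cdot x^{-t}=x^{1-t}$, which eventually exceeds $k_t/x$ for every constant $k_t$ because $t<2$. So no constant makes the stated $k_t\delta^2/x$ bound hold uniformly up to the endpoints when $t\in(1,2)$, and restricting to a compact subinterval of $(0,1)$ is not an option, since the lemma is later applied at factorizations with $r$ arbitrarily close to $0$ and $1$.

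It is worth saying that your instinct points at a real defect rather than a missing routine detail: the paper's own proof simply invokes smoothness on the compact interval $[0,1]$ to produce a uniform Taylor constant, while its displayed formula for $\mu_t''$ exhibits precisely the $r^{t-2}$ and $(1-r)^{t-2}$ blow-up you describe, so the published proof has the same unaddressed problem for $t\in(1,2)$. The honest repair is to weaken the conclusion to an error of the form $k_t\,\delta^{m}/x^{m-1}$ with $m=\min(t,2)$, which your integral-remainder computation does deliver (bound the remainder by $h\int_r^{r+h}\mu_t''$ and use monotonicity of $s^{t-2}$ and $(1-s)^{t-2}$ to compare with the integral over $[0,h]$), and which still tends to $0$ in every application in the paper, where $\delta$ stays bounded and $x\to\infty$. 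As written, though, your proposal does not prove the stated inequality for $t\in(1,2)$.
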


\begin{proof}
    Because $\mu_t$ is smooth (lemma \ref{muprops}) and defined only within the closed, bounded interval $[0,1]$, by Taylor's Theorem, there exists $k_t$ such that for all $r_1,r_2\in [0,1]$, the difference $|\mu_t(r_1)+(r_2-r_1)\mu_t'(r_1)-\mu_t(r_2)|$ is bounded by $k_t(r_2-r_1)^2$.
    
    Furthermore, by the homogeneity of the $l_p$ norm, $\|(m,n)\|_t=x\mu_t\left(\frac{na_2}{x}\right)$ and $\left\|(m,n)+\left(-\frac{\delta}{a_1},\frac{\delta}{a_2}\right)\right\|_t=x\mu_t\left(\frac{na_2+\delta}{x}\right)$. So $$\left|\|(m,n)\|_t+\delta\mu_t'\left(\frac{na_2}{x}\right)-\left\|(m,n)+\left(\frac{\delta}{a_1},-\frac{\delta}{a_2}\right)\right\|_t\right|=$$ $$ \left|x\mu_t\left(\frac{na_2}{x}\right)+x\frac{\delta}{x}\mu_t'\left(\frac{na_2}{x}\right) - x\mu_t\left(\frac{na_2+\delta}{x}\right)\right|=$$ $$ |x|\left| \mu_t\left(\frac{na_2}{x}\right)+\frac{\delta}{x}\mu_t'\left(\frac{na_2}{x}\right)-\mu_t\left(\frac{na_2+\delta}{x}\right) \right|.$$ 
    
    Applying the approximation for $\mu_t$ around $r_1=na_2$ bounds this above by $$|x|k_t\left(\frac{\delta}{x}  \right)^2=k_t\frac{\delta^2}{x}$$ as needed.
\end{proof}

This implies that the length gap $\left|\|(m,n)\|_t-\left\|(m,n)+\left(\frac{-\delta}{a_1},\frac{\delta}{a_2}\right)\right\|_t\right|$ becomes arbitrarily close to $\delta \mu_t'\left(\frac{na_2}{x}\right)$ with a uniform error bound as $x$ grows large.

\subsection{$r_0$}

For $t\in (1,\infty)$, by lemma \ref{muprops}, $\mu_t$ starts larger than $\mu_t(1)=\frac{1}{a_2}$ and decreases monotonically past $\frac{1}{a_2}$ to its single critical point before increasing monotonically back to $\frac{1}{a_2}=\mu_t(1)$. There is thus some point $r_0$ in $[0,\text{argmin}(\mu_t))$ where $\mu_t(r_0)$ is $\frac{1}{a_2}$. On $[0,r_0)$, $\mu_t$ is valued in $(\frac{1}{a_2},\frac{1}{a_1}]$; otherwise, $\mu_t$ is valued in $[\min(\mu_t),\frac{1}{a_2}]$ We locate this point, which partitions the domain of $\mu_t$ into these two regions, using its eponymous function.

\begin{definition*}
    For all $t\in [1,\infty]$, let $r_0(t):=\min\{r:\mu_t(r)=\mu_t(1)=\frac{1}{a_2}\}$.
\end{definition*}

All of the following sections make use of the property of $r_0$ outlined above and formalized below: 

\begin{lemma}
    For all $t\in (1,\infty)$ and $l\in \text{range}(\mu_t)-\{\min(\mu_t)\}$:

        If $l\in (\frac{1}{a_2},\frac{1}{a_1}]$, then $\mu_t^{-1}(l)=\{r_l\}$ for some $r_l\in [0,r_0(t))$. Furthermore, $\mu_t(r)<l$ if and only if $r>r_l$
    \label{r0}
\end{lemma}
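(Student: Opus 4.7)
The plan is to use the convexity and boundary behavior of $\mu_t$ from Lemma \ref{muprops} to describe the graph of $\mu_t$ explicitly; once the shape is pinned down, both conclusions fall out of the intermediate value theorem and strict monotonicity on one half of the graph.

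First, I would observe that items (3)--(5) of Lemma \ref{muprops} force $\mu_t'$ to be strictly increasing on $(0,1)$ from the limit $-1/a_1 < 0$ at $0$ to $1/a_2 > 0$ at $1$. Hence there is a unique critical point $r^* \in (0,1)$ with $\mu_t'(r^*) = 0$, and $\mu_t$ is strictly decreasing on $[0, r^*]$ and strictly increasing on $[r^*, 1]$, with its global minimum $\min(\mu_t) = \mu_t(r^*)$. Strict increase on the right half together with $\mu_t(1) = 1/a_2$ yields $\min(\mu_t) < 1/a_2$; combined with $\mu_t(0) = 1/a_1 > 1/a_2$, continuity and strict monotonicity force a unique crossing of the level $1/a_2$ on $[0, r^*)$, and by the definition this crossing is $r_0(t)$. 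The resulting picture is: $\mu_t|_{[0,\, r_0(t))}$ is a strictly decreasing bijection onto $(1/a_2,\, 1/a_1]$, while $\mu_t|_{[r_0(t),\, 1]}$ is contained entirely in $[\min(\mu_t),\, 1/a_2]$.

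The first claim of the lemma is then immediate. For $l \in (1/a_2, 1/a_1]$, the value $l$ cannot be attained on $[r_0(t), 1]$ because $l > 1/a_2$, so $\mu_t^{-1}(l) \subseteq [0, r_0(t))$; on that interval, strict monotonicity yields exactly one preimage, which we call $r_l$.

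For the ``furthermore'' clause I would split on the position of $r$. If $r \in (r_l, r_0(t))$, strict monotone decrease on $[0, r^*]$ gives $\mu_t(r) < \mu_t(r_l) = l$; if $r \in [r_0(t), 1]$, the image bound gives $\mu_t(r) \leq 1/a_2 < l$; while if $r \in [0, r_l]$, the same monotonicity gives $\mu_t(r) \geq \mu_t(r_l) = l$. No step here is genuinely hard; the only subtlety worth highlighting is verifying the strict inequality $\min(\mu_t) < 1/a_2$, since that is precisely what places $r_0(t)$ strictly left of $r^*$ and makes the decreasing half of $\mu_t$ a clean bijection onto $(1/a_2, 1/a_1]$.
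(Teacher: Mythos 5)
Your argument is correct and follows essentially the same structure as the paper's: establish that $\mu_t$ strictly decreases to a unique interior minimum then strictly increases, locate $r_0(t)$ on the decreasing branch, pull $r_l$ out of the IVT on $[0,r_0(t))$, rule out preimages on $[r_0(t),1]$ via the image bound, and split into cases for the equivalence. The one place you diverge is in establishing $\min(\mu_t) < 1/a_2$: the paper cites Behera et al.\ for the closed-form $\min(\mu_t) = 1/\|(a_1,a_2)\|_q$ and then applies H\"older's inequality, whereas you derive the strict inequality purely from the shape of $\mu_t$ — strict increase on $[r^*,1]$ with $\mu_t(1)=1/a_2$ gives $\mu_t(r^*)<1/a_2$ at once. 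Your version is more self-contained and avoids an external dependency; the paper's version costs a citation but also identifies the exact value of the minimum, which it has on hand anyway from the elasticity literature. Both are valid, and the remainder of your argument matches the paper step for step.
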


\begin{proof}
 Let $q$ be the Holder conjugate of $t$. By \cite{ateam} and Holder's inequality \cite{lpspace}, $$\min(\mu_t)=\frac{1}{\|(a_1,a_2)\|_q}<\frac{1}{\|(a_1,a_2)\|_\infty}=\frac{1}{a_2}$$ So $r_0(t)\neq \text{argmin}(\mu_t)$. Because $\mu_t(0)=\frac{1}{a_1}>\frac{1}{a_2}>\min(\mu_t)$, by continuity of $\mu_t$ (lemma \ref{muprops}) and the intermediate value theorem, there exists $r\in (0,\text{argmin}(\mu_t))$ with $\mu_t(r)=\frac{1}{a_2}$. So by minimality of $r_0(t)$, $r_0(t)<\text{argmin}(\mu_t)$. By lemma \ref{muprops}, $\mu_t$ is strictly decreasing on $(0,\text{argmin}(\mu_t))$ and is strictly increasing on $(\text{argmin}(\mu_t),1)$.

 Now consider $l\in (\frac{1}{a_2},\frac{1}{a_1}]$. Since $\mu_t(r_0(t))=\frac{1}{a_2}<l<\frac{1}{a_1}=\mu_t(0)$ and $\mu_t$ is continuous, by the IVT, there exists $r_l\in (0,r_0(t))\cap \mu_t^{-1}(\{l\})$. As mentioned, $\mu_t$ is strictly decreasing on $(0,\text{argmin}(\mu_t)$, meaning $\mu_t$ is injective on $(0,\text{argmin}(\mu_t))$. So $r_l$ is the only member of $(0,\text{argmin}(\mu_t))\cap \mu_t^{-1}(\{l\})$.
 
    Furthermore, $\mu_t$ is increasing on $(\text{argmin}(\mu_t),1)$, so $\mu_t((\text{argmin}(\mu_t),1])=(\mu_t(\text{argmin}(\mu_t)),\mu_t(1)]$. Since $\mu_t(1)=\frac{1}{a_2}<l$, $(\text{argmin}(\mu_t),1]\cap \mu_t^{-1}(\{l\})$ is empty as well. So $\mu_t^{-1}(l)=[0,r_0(t))\cap\mu_t^{-1}(\{l\})=\{r_l\}$.

 We have just shown that if $r\in [r_0(t),1]$, then $\mu_t(r)\leq \frac{1}{a_2}<l$. Since $\mu_t$ is decreasing on $(0,r_0(t)]$, if $r\in (r_l,r_0(t))$, then $\mu_t(r)<l$ as well. So if $r\in (r_l,1]$, then $\mu_t(r)<l$. Conversely, if $r\in [0,r_l)$, because $\mu_t$ is decreasing on $[0,r_0(t))$ and $r_l<r_0(t)$, $\mu_t(r)>\mu_t(r_l)$. So $\mu_t(r)<l$ iff $r>r_l$. 
\end{proof}

We will hold discussion of other properties for section 6, where they are needed.

\subsection*{$P$}

The number $\mu_t'(r_0(t))$ splits $\Delta_t(S)$ into two sections requiring separate arguments. It also approximates elements of $\Delta_t(S)$ and identifies a family of perspicacious $t$. Let the function $P(t)$ denote this important value.

\section{Density in $[0,|a_1a_2P(t)|]$}







\begin{proposition}
    If $r_0(t)$ is irrational, then $\Delta_t(S)$ is dense in $[0,|a_1a_2P(t)|]$.\label{denselow}
\end{proposition}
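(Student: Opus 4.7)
The plan is to construct, for each $v \in (0, a_1 a_2 |P(t)|)$ and each $\varepsilon > 0$, an element $x \in S$ with two consecutive elements of $\mathscr{L}_t(x)$ whose difference lies within $\varepsilon$ of $v$; density on the closed interval will then follow by taking limits at the endpoints. The strategy restricts to $x := N a_2$ and pairs the boundary factorization $(0,N)$ of $x$ (whose $r$-value $na_2/x$ equals $1$ and whose length is exactly $x/a_2 = N$) with the factorization at the grid point $r_1$ closest to $r_0(t)$ from below. By Lemma \ref{r0}, $\mu_t > 1/a_2$ on $[0, r_0)$ and $\mu_t \le 1/a_2$ on $[r_0, 1]$ with equality only at $r_0$ (which is not on the grid, being irrational) and $r = 1$; combined with strict monotonicity of $\mu_t$ on $[0, r_0]$ from Lemma \ref{muprops}, this makes $\ell_1 := x\mu_t(r_1)$ the unique smallest length in $\mathscr{L}_t(x)$ exceeding $x/a_2$ and $N$ the unique largest not exceeding $x/a_2$, so $N$ and $\ell_1$ are consecutive in $\mathscr{L}_t(x)$.

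Next, I tune the gap using the irrationality of $r_0$. For $x = N a_2$, valid factorizations $(m,n)$ satisfy $n \equiv N \pmod{a_1}$, so the $r$-grid is $\{n/N : n = N, N - a_1, N - 2 a_1, \ldots\}$ with spacing $a_1/N$. A short calculation gives $r_0 - r_1 = (a_1 - s)/N$, where $s := (1 - r_0) N \bmod a_1$, so the scaled offset $\alpha := x(r_0 - r_1) = a_2 (a_1 - s)$ lies in $(0, a_1 a_2)$. Since $r_0$ is irrational, so is $(1 - r_0)/a_1$, and Weyl's equidistribution theorem forces $s$ (and hence $\alpha$) to equidistribute in $[0, a_1)$ (respectively $(0, a_1 a_2)$) as $N$ ranges over $\mathbb{Z}_{>0}$. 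Setting $\alpha^* := v/|P(t)| \in (0, a_1 a_2)$, I may select arbitrarily large $N$ with $|\alpha - \alpha^*|$ arbitrarily small.

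Finally, Taylor's theorem applied to $\mu_t$ about $r_0$ (smooth by Lemma \ref{muprops}, so $\mu_t''$ is bounded on the compact interval $[0,1]$) gives $\mu_t(r_1) = 1/a_2 + \alpha |P(t)|/x + O(1/x^2)$, whence the gap is $\ell_1 - N = \alpha |P(t)| + O(1/N)$. Choosing $N$ large enough that both $|\alpha - \alpha^*| \cdot |P(t)|$ and the $O(1/N)$ Taylor error fall below $\varepsilon/2$ yields a gap within $\varepsilon$ of $v$. The main technical point to verify is the consecutiveness claim: every other high-region grid point lies in $[0, r_1)$ with $\mu_t > \mu_t(r_1)$ (lengths $> \ell_1$), and every other low-region grid point satisfies $\mu_t < 1/a_2$ strictly (lengths $< N$), so nothing lands in the open interval $(N, \ell_1)$. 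A secondary subtlety is that for $\alpha^*$ very close to $a_1 a_2$ the Taylor error threatens to dominate, so the endpoint $v = a_1 a_2 |P(t)|$ is recovered only as a limit point of the interior density.
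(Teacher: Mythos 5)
Your proof is correct and follows essentially the same approach as the paper's: pair the boundary factorization $(0, x/a_2)$ (at $r=1$, with length exactly $x/a_2 = x\mu_t(r_0)$) against the factorization at the grid point just below $r_0$, use irrationality of $r_0$ (via equidistribution/density of irrational rotations) to tune that grid offset toward $v/|P(t)| \in (0, a_1 a_2)$, invoke the Taylor estimate (Lemma \ref{approx}) to convert the offset into a length gap $\approx v$, and verify consecutiveness from the injectivity of $\mu_t$ on $[0,r_0)$ together with the $a_1 a_2$ grid spacing in $xr$. The only cosmetic differences are the choice $x = N a_2$ rather than $x = a_1 a_2 N$ and the appeal to Weyl equidistribution where the paper uses density of irrational rotations mod $1$.
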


\begin{proof}

    The argument of this proposition will proceed as follows. When $r_0(t)$ is irrational, we can engineer elements $x\in S$ to have both a factorization $\hat{f_1}$ at $\left(0,\frac{x}{a_2}\right)$, and a factorization $f_2$ whose $r$ parameter takes on a precisely chosen value slightly less than $r_0(t)$. By lemma \ref{r0}, $\|\hat{f}_2\|_t$ will exceed $x\mu_t(r_0(t))=\frac{x}{a_2}=\|\hat{f}_1\|_t$ by a controlled amount; moreover, $\hat{f}_2$ will lie on an injective region of the $\mu_t$ curve, where it is simple to prove that no other factorization lengths lie between $\|\hat{f_1}\|_t$ and $\|\hat{f}_2\|_t$.

    Let $r_0:=r_0(t)$, let $d\in [0,-a_1a_2P(t)]$, and let $\epsilon>0$. Because irrational multiplicative cosets of $\mathbb{Z}$ are dense in $\mathbb{R}/\mathbb{Z}$ \cite{Rudin}, for all $\epsilon'$, there exists infinitely many $N$ (and thus arbitrarily large $N$) such that $Nr_0+\frac{d}{a_1a_2P(t)}$ is at most $\epsilon'$ away from an integer, which we will denote by $z'$. Pick $\epsilon'$ and $N$ such that
    
    \begin{enumerate}
        \item $a_1a_2\epsilon'\left\|\left(\frac{1}{a_1},\frac{1}{a_2}\right)\right\|_t<\frac{\epsilon}{2}$
        \item $\frac{k_td^2}{P(t)^2a_1a_2N}<\frac{\epsilon}{2}$
        \item $ a_1a_2\epsilon'<a_1a_2+\frac{d}{P(t)}$
    \end{enumerate}

    Let $x:=a_1a_2N$ and $z:=a_1a_2z'$. By assumption (1), $xr_0+\frac{d}{P(t)}$ is at most $a_1a_2\epsilon'$ away from $z$. Furthermore, $\hat{f_1}:=(\frac{x-z}{a_1},\frac{z}{a_2})$ and $\hat{f_2}:=(0,\frac{x}{a_2})$ are integer factorizations of $x$. 

    By the above bounds, $\hat{f_1}$ lies within $$\left(\frac{x-xr_0-\frac{d}{P(t)}\pm a_1a_2\epsilon'}{a_1},\frac{xr_0+\frac{d}{P(t)}\pm a_1a_2\epsilon'}{a_2}\right) = \left(\frac{x-xr_0}{a_1},\frac{xr_0}{a_2}\right)+\left(-\frac{d}{P(t)a_1},\frac{d}{P(t)a_2} \right)\pm \left(a_2\epsilon',a_1\epsilon' \right)$$ 

    By the triangle inequality of norms, the monotonicity of the $t$-norm in this region, and the choice of $\epsilon'$,

    \begin{equation}\left|\|\hat{f_1}\|_t-\left\| \left(\frac{x-xr_0}{a_1},\frac{xr_0}{a_2}\right)+\left(-\frac{d}{P(t)a_1},\frac{d}{P(t)a_2} \right)\right\|_t\right|\leq a_1a_2\epsilon'\left\|\left(\frac{1}{a_1},\frac{1}{a_2}\right)\right\|_t<\frac{\epsilon}{2}\label{eq1}\end{equation}
    
    Furthermore, by lemma \ref{approx} and choice of $N$, \begin{equation}\left|\left\| \left( \frac{x-xr_0}{a_1},\frac{xr_0}{a_2} \right)\right\|_t+\frac{d}{P(t)}P(t)-\left\|\left(\frac{x-xr_0}{a_1},\frac{xr_0}{a_2}\right)+\left(-\frac{d}{P(t)a_1},\frac{d}{P(t)a_2} \right)\right\|_t\right|<k_t\frac{d^2}{P(t)^2x}<\frac{\epsilon}{2}\label{eq2}\end{equation}.

    Finally, \begin{equation}\|\hat{f_2}\|_t=\left\|\left(0,\frac{x}{a_2} \right)\right\|_t=\left\|\left(0,\frac{x}{a_2} \right)\right\|_t=x\left\|\left(0,\frac{1}{a_2} \right)\right\|_t=x\mu_t(r_0(t))=\left\|\left(\frac{x-xr_0}{a_1},\frac{r_0}{a_2}\right)\right\|_t\label{eq3}\end{equation} So by equations  \ref{eq1}, \ref{eq3} $$|\|\hat{f_1}\|_t-\|\hat{f_2}\|_t-d|<\left|\left\|\left(\frac{x-xr_0}{a_1},\frac{xr_0}{a_2}\right)\right\|_t-\left\| \left(\frac{x-xr_0}{a_1},\frac{xr_0}{a_2}\right)+\left(-\frac{d}{P(t)a_1},\frac{d}{P(t)a_2} \right)\right\|_t\right|-d+\frac{\epsilon}{2}$$ which, by equation \ref{eq2}, is less than $\frac{d}{P(t)}P(t)-d+\frac{\epsilon}{2}+\frac{\epsilon}{2}=\epsilon$. So $\|\hat{f_1}\|_t-\|\hat{f_2}\|_t\in (d-\epsilon,d+\epsilon)$.

    To show that $\Delta_t(x)\cap (d-\epsilon,d+\epsilon)$ is nonempty, it thus suffices to show that $\|\hat{f_1}\|_t,\|\hat{f_2}\|_t$ are consecutive lengths in $\mathscr{L}_t(x)$. Suppose there were some factorization $x\left(\frac{1-r}{a_1},\frac{r}{a_2}\right)\in Z(x)$ with $$x\left\|\left(\frac{1-r}{a_1},\frac{r}{a_2}\right)\right\|_t=\in(\|\hat{f_2}\|_t,\|\hat{f_1}\|_t)=\left(x\left\| \frac{1-\frac{z}{x}}{a_1},\frac{z}{xa_2} \right\|_t , x\mu_t(t) \right)=(x\mu_t(\frac{z}{x}),x\mu_t(r_0))$$ (see equation 3 and the definition of $\hat{f_1}$). By lemma \ref{r0}, this means that $\frac{z}{x}<r<r_0$, or that $z<xr<xr_0$. As $\frac{x-xr}{a_1}$ and $\frac{r}{a_2}$ and $\frac{x}{a_1}$ are all integers, $\frac{xr}{a_1a_2}$ must be an integer. Since $a_1a_2|z$ as well, $xr-z\geq a_1a_2$. But $z$ was constructed so that $xr_0-z<\frac{-d}{P(t)}+a_1a_2\epsilon'$, which, by choice of $\epsilon'$, is less than $a_1a_2$. So $z<xr<xr_0$ is a contradiction.

    Therefore, for all $\epsilon>0$ and $d\in (0,|a_1a_2P(t)|)$, there exists $x$ with $\Delta_t(x)\cap (d-\epsilon,d+\epsilon)\neq \emptyset$. So $\Delta_t(S)$ is dense in $[0,|a_1a_2P(t)|]$.
 \end{proof}

\section{Density in $[|a_1a_2P(t)|,a_2]$}

In this section we may relax the restriction that $r_0(t)\not\in\mathbb{Q}$.

\begin{proposition}
    For all $t\in (1,\infty)$, $\Delta_t(S)$ is dense in $[|a_1a_2P(t)|,a_2]$\label{densehigh}
\end{proposition}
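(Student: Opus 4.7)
The plan is, for any $d \in [|a_1a_2P(t)|, a_2]$ and any $\epsilon > 0$, to exhibit $x \in S$ carrying two factorizations $f_1, f_2$ of $x$ related by the lattice exchange $f_2 = f_1 + (-a_2, a_1)$, both with $r$-parameter in $[0, r_0(t)]$, that are consecutive in $\mathscr{L}_t(x)$ and satisfy $|\|f_1\|_t - \|f_2\|_t - d| < \epsilon$. Unlike Proposition \ref{denselow}, no rationality hypothesis on $r_0(t)$ should be needed here: the pair $f_1, f_2$ will be nearest neighbors in the lattice of factorizations of $x$, so every $r$-parameter of the form $k/N$ is automatically available once $x = a_1 a_2 N$.

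First, by Lemma \ref{muprops} the derivative $\mu_t'$ is continuous on $[0, r_0(t)]$ with $\mu_t'(0) = -1/a_1$ and $\mu_t'(r_0(t)) = P(t) < 0$ (negativity following from Lemma \ref{r0}, since $r_0(t)$ lies strictly inside the decreasing region of $\mu_t$), so the map $r \mapsto -a_1 a_2 \mu_t'(r)$ sends $[0, r_0(t)]$ continuously onto $[|a_1 a_2 P(t)|, a_2]$. Thus by the intermediate value theorem there exists $r^* \in [0, r_0(t)]$ with $-a_1 a_2 \mu_t'(r^*) = d$, where for the boundary case $r^* = r_0(t)$ one simply approximates $d$ from the interior by continuity. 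For $N$ large, set $x := a_1 a_2 N$ and choose an integer $k$ so that $r_1 := k/N$ lies within $1/N$ of $r^*$ and $r_2 := r_1 + 1/N \leq r_0(t)$; let $f_1, f_2$ be the integer factorizations of $x$ with these $r$-parameters. Applying Lemma \ref{approx} with $\delta = a_1 a_2$ yields $\|f_1\|_t - \|f_2\|_t = -a_1 a_2 \mu_t'(r_1) + O(1/N)$, and combining this with $|r_1 - r^*| \leq 1/N$ and continuity of $\mu_t'$ gives $|\|f_1\|_t - \|f_2\|_t - d| < \epsilon$ for $N$ sufficiently large.

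The main obstacle is verifying that $\|f_1\|_t$ and $\|f_2\|_t$ are consecutive in $\mathscr{L}_t(x)$; indeed this is precisely why the interval $[0, a_2]$ must be split at $|a_1 a_2 P(t)|$ in the first place. Any other factorization of $x$ has $r$-parameter $j/N$ for some integer $j \notin \{k, k+1\}$. If $j/N \in [0, r_0(t)]$, then $j/N \notin (r_1, r_2)$ (no integer lies strictly between $k$ and $k+1$), so strict monotonicity of $\mu_t$ on $[0, r_0(t)]$ from Lemma \ref{muprops} places $\|f_j\|_t$ outside $(\|f_2\|_t, \|f_1\|_t)$. If instead $j/N > r_0(t)$, then Lemma \ref{r0} forces $\mu_t(j/N) \leq 1/a_2 \leq \mu_t(r_2)$, hence $\|f_j\|_t \leq \|f_2\|_t$, again excluding the open interval. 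Therefore $\|f_1\|_t - \|f_2\|_t \in \Delta_t(x) \subseteq \Delta_t(S)$, and density follows by letting $d$ and $\epsilon$ vary.
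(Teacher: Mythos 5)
Your proposal is correct and follows essentially the same route as the paper: choose $x=a_1a_2N$, take two factorizations differing by the trade $(a_2,-a_1)$ with $r$-parameters below $r_0(t)$ chosen (via a rational approximation to the slope target) so that Lemma \ref{approx} pins the gap near $d$, and prove consecutiveness from the fact that all $r$-parameters of factorizations of $x$ are multiples of $\frac{1}{N}$ together with Lemma \ref{r0} and monotonicity of $\mu_t$. Your phrasing of the consecutiveness step (integer multiples of $\frac{1}{N}$, plus the bound $\mu_t(r)\le\frac{1}{a_2}$ beyond $r_0(t)$) is just a repackaging of the paper's congruence argument, and your IVT step to hit $-\frac{d}{a_1a_2}$ exactly before approximating is equivalent to the paper's choice of a rational $p$ near the preimage.
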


\begin{proof}
    We will prove this proposition by picking factorizations differing by a trade $(a_2,-a_1)$ whose $r$ parameters are less than $r_0(t)$. This places the corresponding factorization lengths in the injective region of $\mu_t$, which implies that they are consecutive in their element's length set. By lemma \ref{approx}, we may then approximate their gap using $|a_1a_2\mu_t'|$; adjusting the $r$ parameters of the factorizations allows the gap to approach any value in $|a_1a_2\mu_t'|((0,r_0(t)))=[|a_1a_2P(t)|,a_2]$.

    Let $d\in (-a_1a_2P(t),a_2)$ and let $\epsilon>0$. By lemma 1 (monotonicity and limiting behavior of $\mu_t'$), $\mu'_t((0,r_0(t)))=(\lim\limits_{r\rightarrow 0^+}\mu_t'(r),P(t))=(-\frac{1}{a_1},P(t))$. By lemma 1, $\mu_t$ is smooth, so $\mu_t'$ is continuous. This allows for arbitrarily fine approximation of points in the image of $\mu_t'$ using rational preimages \cite{munkres}. In particular, $\frac{-d}{a_1a_2}\in (-\frac{1}{a_1},P(t))=\text{range}(\mu'_t)$.

    Accordingly, let $p\in \mathbb{Q}\cap (0,r_0)$ be such that $|\mu_t'(p)-\frac{-d}{a_1a_2}|<\frac{\epsilon}{2a_1a_2}$. Furthermore, let $\frac{M}{N}$ be a representation of $p$ such that $\frac{k_t}{N}<\frac{\epsilon}{2a_1a_2}$, where $k_t$ is from lemma \ref{approx}.

    Let $x:=a_1a_2N$. Observe that $\hat{f_1}:=x\left(\frac{1-\frac{M}{N}}{a_1},\frac{\frac{M}{N}}{a_2}\right)\in Z(x)$. Furthermore, $x(1-\frac{M}{N})\leq x(1-\frac{1}{N})=x-a_1a_2$, so $\hat{f_2}:=x\left(\frac{1-\frac{M}{N}}{a_1},\frac{\frac{M}{N}}{a_2}\right)+(a_2,-a_1)\in Z(x)$ as well. By lemma \ref{approx} and choice of $N$, $$\left|\|\hat{f_1}\|_t-\|\hat{f_2}\|_t-a_1a_2\mu'_t\left(\frac{M}{N}\right)\right|=\left|\|\hat{f_1}\|_t-\|\hat{f_2}\|_t-a_1a_2\mu'_t(p)\right|<\frac{k_t(a_1a_2)^2}{x}=\frac{k_ta_1a_2}{N}<\frac{\epsilon}{2}$$ So by choice of $p$, $$\left|\|\hat{f_1}\|_t-\|\hat{f_2}\|_t-d\right|=\left|\|\hat{f}\|_t-\|\hat{f_2}\|_t-a_1a_2\mu_t'(p)+a_1a_2\mu_t'(p)-a_1a_2\frac{-d}{a_1a_2}\right|$$
    $$\leq \left|\|\hat{f_1}\|_t-\|\hat{f_2}\|_t-a_1a_2\mu_t'(p)\right|+\left|a_1a_2\mu_t'(p)-a_1a_2\frac{-d}{a_1a_2}\right| <\frac{\epsilon}{2}+\frac{\epsilon}{2}=\epsilon$$ So $$\|\hat{f_1}\|_t-\|\hat{f_2}\|_t|\in (d-\epsilon,d+\epsilon) $$

    Now suppose there were some $\hat{f_3}=x\left(\frac{1-r}{a_1},\frac{r}{a_2}\right)\in Z(x)$ with $\|\hat{f_3}\|_t\in (\hat{f_1}\|_t,\|\hat{f_2}\|_t)$. Because $\|\hat{f_1}\|_t=x\mu(p)$ and $\|\hat{f_2}\|_t=x\mu(p-\frac{a_1a_2}{x})$ and $p<r_0$, by lemma \ref{r0}, $p-\frac{a_1a_2}{x}<r<p$. So $xp-a_1a_2<xr<xp$. However, as $\hat{f_1}$ is an integer factorization, $a_2|xp$ and $a_1|x-xp$. But as $a_1a_2|x$, this means $a_1a_2|xp$ and thus that $a_1a_2|xp-a_1a_2$. But as $\hat{f_3}$ is an integer factorization, $a_1a_2|xr$, contradicting $xp-a_1a_2<xr<xp$.

    so $\|\hat{f_2}\|_t$ and $\|\hat{f_1}\|_t$ are consecutive in $\mathscr{L}_t(x)$, meaning $\|\hat{f_1}\|_t-\|\hat{f_2}\|_t\in \Delta_t(x)$.

    So for all $d\in (-a_1a_2P(t),a_2)$ and $\epsilon>0$, there exists $x$ with $\Delta_t(x)\cap (d-\epsilon,d+\epsilon)\neq \emptyset$. So $\Delta_t(S)$ is dense in $(-a_1a_2P(t),a_2)$.
    
\end{proof}

\section{The sparse interval $[a_1,|a_1a_2P(t)|]$}

\begin{proposition}
    If $r_0(t)$ is rational and $|a_1a_2P(t)|>a_1$, then $t$ is perspicacious.\label{edgecase}
\end{proposition}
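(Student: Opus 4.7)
The plan is to show that $\overline{\Delta_t(S)} \cap (a_1, |a_1a_2P(t)|)$ is finite, which forces $\Delta_t(S)$ to leave open sub-intervals of $[0, a_2]$ uncovered and hence makes $t$ perspicacious. The approach is to classify, for each large $x \in S$, every consecutive-length gap in $\mathscr{L}_t(x)$ according to the position of the two factorizations on the $\mu_t$ curve, and then argue that only one class of gaps can land inside the sparse interval $(a_1, |a_1a_2P(t)|)$ (which is non-empty by hypothesis), and that class admits only finitely many accumulation values when $r_0$ is rational.

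First I partition the $r$-parameter axis by monotonicity of $\mu_t$ into the three injective sub-regions $[0, r_0]$, $(r_0, \text{argmin}(\mu_t))$, and $(\text{argmin}(\mu_t), 1]$. Within a single sub-region Lemma~\ref{approx} gives consecutive-$r$ gaps of size $\approx a_1a_2|\mu_t'(r)|$, taking values in $[|a_1a_2P(t)|, a_2]$, $[0, |a_1a_2P(t)|]$, and $[0, a_1]$ respectively, so any gap that remains inside the first sub-region exceeds $|a_1a_2P(t)|$ and misses the sparse interval. Because the second and third sub-regions share the length range $(x\min(\mu_t), x/a_2]$, their length sets interleave, and the key bound is that for all $x$ every consecutive-length gap strictly below $x/a_2$ is less than $a_1$. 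Given such a length $L = x\mu_t(r_*)$ and its twin $\rho$ in the opposite region satisfying $\mu_t(\rho) = \mu_t(r_*)$, the element of $\{r_*, \rho\}$ lying in region 3 has its nearest strictly-lower factorization-$r$-value within lattice spacing $a_1a_2/x$, producing a length strictly below $L$ at distance at most $a_1a_2\mu_t'(\cdot) < a_1$ (strict because region-3 elements lie strictly below $1$). The actual consecutive gap is the smaller of this region-3 bound and the analogous region-2 bound, hence always less than $a_1$; the only corner case is when $r_*$ and $\rho$ lie within one lattice spacing of each other, in which case $L$ coincides with the minimum of $\mathscr{L}_t(x)$ and contributes no downward gap.

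Finally, the gaps that straddle $x/a_2$ from above are approximately $|P(t)| \cdot x(r_0 - r_{\max}^{(1)})$ by Taylor expansion, where $r_{\max}^{(1)}$ denotes the largest $r$-parameter of a factorization of $x$ strictly below $r_0$. Writing $r_0 = p/q$ in lowest terms, $x(r_0 - r_{\max}^{(1)})$ lies in $\frac{1}{q}\mathbb{Z} \cap (0, a_1a_2]$ and depends only on $x \bmod \mathrm{lcm}(a_1, q)$, so these straddling gaps accumulate only at the finite grid $\{a_1a_2j|P(t)|/q : 1 \leq j \leq q\}$, yielding at most $q - 1$ accumulation points inside $(a_1, |a_1a_2P(t)|)$. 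Combining the three cases, $\overline{\Delta_t(S)} \cap (a_1, |a_1a_2P(t)|)$ consists of a finite set together with countably many transient values from small $x$, leaving an open nonempty complement in the sparse interval and proving $t \in \mathcal{P}$. I expect the main obstacle to be the low-region bound, where the non-injectivity of $\mu_t$ on $(r_0, 1)$ forces one to track both branches simultaneously, the degenerate near-minimum configuration must be set aside, and the quadratic Taylor error from Lemma~\ref{approx} must be shown to remain below the slack $a_1 - a_1a_2\mu_t'(\rho)$.
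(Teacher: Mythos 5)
Your overall strategy is the same as the paper's: split the $r$-axis at $r_0(t)$ and $\text{argmin}(\mu_t)$, show that consecutive lengths below $\frac{x}{a_2}$ differ by less than $a_1$ (using the fine lattice of the region near $r=1$, which interleaves with the region just above $r_0$), that consecutive lengths above $\frac{x}{a_2}$ differ by more than $|a_1a_2P(t)|$, and use rationality of $r_0$ to quantize whatever straddles $\frac{x}{a_2}$, so that some $d$ in the nonempty interval $(a_1,|a_1a_2P(t)|)$ fails to be a limit point of $\Delta_t(S)$.

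The genuine gap is in your straddling case. You take the straddling gap to be approximately $|P(t)|\,x\bigl(r_0-r_{\max}^{(1)}\bigr)$, i.e.\ you measure only the portion of the gap lying above $\frac{x}{a_2}$. This tacitly assumes the largest attained length not exceeding $\frac{x}{a_2}$ is within $o(1)$ of $\frac{x}{a_2}$, which fails for general $x$ (essentially whenever $a_2\nmid x$ and the lattice point just above $r_0$ is not extremely close to $r_0$): the nearest length from below can sit anywhere up to roughly $a_1$ below $\frac{x}{a_2}$, and that deficit is itself quantized --- up to the $O(1/x)$ Taylor error it is a multiple of $\frac{1}{qa_2}$ when the maximizing factorization has $r$ near $1$, or a multiple of $\frac{|P(t)|}{q}$ when it has $r$ just above $r_0$. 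Consequently the straddling gaps accumulate not on your grid $\{a_1a_2 j|P(t)|/q\}$ (which is in any case too coarse even for the upper part, whose quantum is $\frac{|P(t)|}{q}$, not $\frac{a_1a_2|P(t)|}{q}$) but on the sumset of the two grids $\{\lambda_1|P(t)|/q\}$ and $\{\lambda_2/(qa_2)\}$ with bounded coefficients; your count of ``at most $q-1$ accumulation points'' and the final conclusion do not follow as written. This is exactly why the paper's proof chooses $d\in(a_1,|a_1a_2P(t)|)$ avoiding the set $\Lambda$ of such integer combinations, sets $\epsilon$ to a quarter of the distance from $d$ to $a_1$, to $|a_1a_2P(t)|$, and to $\Lambda$, and then shows that each of the two lengths flanking $\frac{x}{a_2}$ lies within $\epsilon$ of its respective grid. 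Since both deficits are bounded by one lattice step $a_1a_2$, only finitely many combinations occur, so the corrected accumulation set is still finite (just larger than yours), and with that repair --- plus the Taylor-error bookkeeping you already flagged --- your argument closes and essentially reproduces the paper's proof.
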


\begin{proof}
    Suppose $r_0$ is rational with simplified representation $\frac{M}{N}$. Let $d\in (a_1,|a_1a_2P(t)|)-\Lambda$, where $$\Lambda:=\left\{\frac{\lambda_1\mu_t'(r_0)}{N}+\frac{\lambda_2}{a_2}:\lambda_1,\lambda_2\in \mathbb{N}\right\}$$ Let $\epsilon:=\frac{1}{4}\min(d-a_1,|a_1a_2\mu_t'(r_0)|-d,d')$, where $d'$ is the distance from $d$ to the closest element of $\Lambda$.

    Now let $x_0$ be large enough that $\frac{k_t(a_1a_2)^2}{x_0}<\epsilon$ and let $x\geq x_0$. We will show that if $l,l'$ are consecutive in $\mathscr{L}_t(x)\cap \left[\min(\mathscr{L}_t(x)),\frac{x}{a_2}\right]$, then $|l'-l|<d-\epsilon$, and if $l,l'$ are consecutive in $\mathscr{L}_t(x)\cap \left[\frac{x}{a_2},\frac{x}{a_1}\right]$, then $|l'-l|>d+\epsilon$. We do so by examining the set $$F:=\left\{x\left(\frac{1-r}{a_1},\frac{r}{a_2}\right):x\left(\frac{1-r}{a_1},\frac{r}{a_2}\right)\in Z(x);~r\geq r_m\right\}$$ where $r_m:=\text{argmin}(\mu_t)$.

    \begin{enumerate}
        \item Let $\hat{f}\in F$ have maximal second coordinate. By maximality, $\hat{f}+(-a_2,a_1)$ cannot be a factorization, so $x-xr<a_1a_2$, meaning $1-r<\frac{a_1a_2}{x}$. Lemma \ref{approx} and the definition of $\epsilon$ then imply that  \begin{equation}\|\hat{f}\|_t\geq  x\mu_t(1)+(xr-x)\mu_t'(1)-\frac{k_t(a_1a_2)^2}{x} > \frac{x}{a_2}-a_1-\frac{k_t(a_1a_2)^2}{x}>x\mu_t(1)-a_1-\epsilon=\frac{x}{a_2}-a_1-\epsilon\label{eq4}\end{equation}. 
        \item Let $\hat{f}=x(\frac{1-r}{a_1},\frac{r}{a_2})\in F$ with $r$ minimal. By minimality, $\hat{f}+(a_2,-a_1)\not\in F$, so $r-\frac{a_1}{x}<r_m$, so $xr-xr_m<a_1a_2$. Again by lemma \ref{approx}, $$\|\hat{f}\|_t\leq x\mu_t(r_m)+(xr-xr_m)\mu_t'(r_m)+\frac{k(xr-xr_m)^2}{x}<x\mu_t(r_m)+0+\epsilon=\min(\mathscr{L}_t(x))+\epsilon$$.
        \item If $\hat{f_1}=x(\frac{1-r}{a_1},\frac{r}{a_2})\in F$ has nonmaximal second coordinate, the factorization $\hat{f_2}:=f+(-a_2,a_1)$ lies in $F$ as well. By lemma \ref{approx}, $|\|\hat{f_1}\|_t-\|\hat{f_2}\|_t|<|a_1a_2\mu_t'(r)|+\frac{k(a_1a_2)^2}{x}$. By lemma \ref{muprops}, $0=\mu_t'(r_m)<\mu_t'(r)<\mu_t'(1)=\frac{1}{a_2}$, so $\|\hat{f_1}\|_t-\|\hat{f_2}\|_t<a_1+\frac{k(a_1a_2)^2}{x}=a_1+\epsilon$.
    \end{enumerate}

    Note that by lemma \ref{muprops}, $\mu_t$ is strictly increasing on $(r_m,1)$, so $\{\|\hat{f}\|_t:\hat{f}\in F\}\subseteq \left[\min(\mathscr{L}_t(x)),\mu_t(1)\right]=\left[\min(\mathscr{L}_t(x)),\frac{x}{a_2}\right]$. So because factorizations in $F$ differ in length from their successors by at most $a_1+\epsilon$ and there exist $f\in F$ with $\|f\|_t>\frac{x}{a_2}-a_1-\epsilon$ and $f'\in F$ with $\|f'\|_t<\min(\mathscr{L}_t(x))+\epsilon$, two lengths in $\mathscr{L}_t(x)\cap \left[\min(\mathscr{L}_t(x)),\frac{x}{a_2}\right]$ differ by at most $a_1+\epsilon$, which, by definition of $\epsilon$, is less than $d-\epsilon$.

    On the other hand, consider factorizations $f=x\left(\frac{1-r}{a_1},\frac{r}{a_2}\right)$ and $f'=x\left(\frac{1-r'}{a_1},\frac{r'}{a_2}\right)$ with $\|f\|_t,\|f'\|_t$ consecutive in $\mathscr{L}_t(x)\cap \left[\frac{x}{a_2},\frac{x}{a_1}\right]$. By lemma \ref{r0}, $r'<r<r_0(t)$. Since $a_1$ divides $x-xr,x-xr'$ and $a_2$ divides $xr,xr'$, $xr\equiv xr'\mod a_1a_2$. So $xr-xr'\geq a_1a_2$, meaning, by lemma \ref{approx}, that $$\|f'\|_t-\|f\|_t>(xr'-xr)\mu'_t(r)-\frac{k_t(xr-xr')^2}{x}>-a_1a_2\mu'_t(r)-\epsilon$$ As $r<r_0$, by lemma \ref{muprops}, $\mu'_t(r)< P(t)<0$, so this is at least $|a_1a_2P(t)|-\epsilon$, which, by definition of $\epsilon$, is more than $d+\epsilon$.

    The only gap left to consider is the gap between $\max\left(\mathscr{L}_t(x)\cap \left[\min(\mathscr{L}_t(x)),\frac{x}{a_2}\right]\right)$ and $\min\left(\mathscr{L}_t(x)\cap \left[\frac{x}{a_2},\frac{x}{a_1}\right]\right)$. Let the factorizations corresponding to these lengths be $ \hat{f_1}=x(\frac{1-r}{a_1},\frac{r}{a_2})$ and $\hat{f_2}=x(\frac{1-r'}{a_1},\frac{r'}{a_2})$, respectively. We will examine this gap by examining the difference between either length and $\frac{x}{a_2}$.

    With $\hat{f_1}$, there are two cases to consider.

        \begin{enumerate}
        \item If $r>r_m$, $\hat{f_1}$ must have maximal second coordinate in $F$. As shown previously, this means $x-xr<a_1a_2$. Lemma \ref{approx} then shows that $\left|\|\hat{f}\|_t-\left(\frac{x}{a_2}-\frac{x-xr}{a_2}\right)\right|<\frac{k_t(1-r)^2}{x}<\frac{k_t(a_1a_2)^2}{x}<\epsilon$. Note that $xr$ is an integer, so $xr-x$ is also an integer, and thus an integer multiple of $\frac{1}{N}$. Therefore, $\left|\|\hat{f}\|_t-\frac{x}{a_2}\right|$ is within $\epsilon$ of an integer multiple of $\frac{1}{Na_2}$.
        \item Else, $r<r_m$. By maximality, $\hat{f}+(a_2,-a_1)$ has an $r$ parameter less than $r_0$. So $xr-xr_0<a_1a_2$. By lemma \ref{approx} $$|\|\hat{f}\|_t-x\mu_t(r_0)-(xr-xr_0)\mu_t'(r_0)|=\left|\|\hat{f}\|_t-\frac{x}{a_2}-(xr-xr_0)P(t))\right|<\frac{k_t(xr-xr_0)^2}{x}<\frac{k_t(a_1a_2)^2}{x}\epsilon$$. Note that $xr$ is an integer and $xr_0$ is a multiple of $\frac{1}{N}$, so $xr-xr_0$ is an integer multiple of $\frac{1}{N}$. So, $\left|\|\hat{f}\|_t-\frac{x}{a_2}\right|$ is within $\epsilon$ of an integer multiple of $\frac{P(t)}{N}$.
    \end{enumerate}

    There are no such cases with $\hat{f_2}$. By Lemma \ref{r0}, $r'<r_0$; by minimality, $xr'+a_1a_2>xr_0$, so $xr_0-xr'<a_1a_2$. So by lemma \ref{approx} and the choice of $x$, $$ |\|\hat{f_2}\|_t-(xr'-xr_0)P(t)-x\mu_t(r_0(t))|= \left|\|\hat{f_2}\|_t-(xr'-xr_0)P(t)-\frac{x}{a_2}\right|<\frac{k_t(xr'-xr_0)^2}{x}<\frac{k_t(a_1a_2)^2}{x}<\epsilon$$. So $\|\hat{f_2}\|t-\frac{x}{a_2}$ is within $\epsilon$ of $(xr'-xr_0)P(t)$. Since $xr'$ is an integer and $r_0$ is a multiple of $\frac{1}{N}$, $\|\hat{f_2}\|t-\frac{x}{a_2}$ is within $\epsilon$ of a multiple of $\frac{P(t)}{N}$.

    Altogether, this means that $\|\hat{f_2}\|_t-\|\hat{f_1}\|_t$ is within $2\epsilon$ of some integer linear combination of the form $\frac{\lambda_1}{Na_2}+\frac{\lambda_2P(t)}{N}$. By choice of $\epsilon$, $d$ is at least $4\epsilon$ away from all such numbers, so $\|\hat{f_2}\|_t-\|\hat{f}\|_t\not\in (d-\epsilon,d+\epsilon)$.

    Therefore, for all $x\geq x_0$, $\Delta_t(x)\cap (d-\epsilon,d+\epsilon)=\emptyset$. Therefore, $\Delta_t(S)\cap (d-\epsilon,d+\epsilon)=\left(\bigcup\limits_{x<x_0} \Delta_t(x)\right)\cap (d-\epsilon,d+\epsilon)$. Since $\bigcup\limits_{x<x_0} \Delta_t(x)$ is finite, $d$ is not a limit point of $\Delta_t(S)$. So $\Delta_t(S)$ is not dense in $[0,a_2]$, so $t$ is perspicacious.
\end{proof}

\section{Locating the edge cases}

We are interested in $$\mathcal{T}:=\{t:t\in (1,\infty);~|P(t)|>\frac{1}{a_2};~r_0(t)\in \mathbb{Q}\}\subseteq \mathcal{P}$$ We will show that $\mathcal{P}$ is countable and locate a ray $(a,\infty)$ where $\mathcal{T}$ is dense, thereby determining the exact cardinality of $\mathcal{P}$ and describing the distribution of the subset $\mathcal{T}$ of it. Doing so requires additional properties of $r_0$.

\begin{lemma}
    $r_0$ is strictly decreasing.\label{decrease}
\end{lemma}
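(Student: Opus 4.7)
The plan is to combine Lemma \ref{r0} with the classical strict monotonicity of $l_p$ norms in $p$ to show that, for $t_1<t_2$ in $(1,\infty)$, we necessarily have $r_0(t_1)>r_0(t_2)$.

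The first step is to verify that $r_0(t_2) \in (0,1)$. Positivity is immediate from $\mu_{t_2}(0)=1/a_1>1/a_2$, and the strict bound $r_0(t_2)<1$ is already established inside the proof of Lemma \ref{r0} via $r_0(t_2)<\text{argmin}(\mu_{t_2})\leq 1$. So both coordinates of the vector $\left(\frac{1-r_0(t_2)}{a_1},\frac{r_0(t_2)}{a_2}\right)$ are strictly positive.

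Next, I would invoke the following standard fact: if $v\in\mathbb{R}^n$ has at least two nonzero coordinates, then $p\mapsto\|v\|_p$ is strictly decreasing. A short justification (which I would include for completeness) normalizes $\|v\|_q=1$, so $|v_i|\leq 1$ with at least two entries in $(0,1)$; since $|v_i|^p\geq|v_i|^q$ for $p<q$ with strict inequality on $(0,1)$, this forces $\|v\|_p^p>1=\|v\|_q^q$. Applying this at $r=r_0(t_2)$ gives
\[
l := \mu_{t_1}(r_0(t_2)) \;>\; \mu_{t_2}(r_0(t_2)) \;=\; \tfrac{1}{a_2}.
\]

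Finally, since $l\leq \mu_{t_1}(0)=1/a_1$, we have $l\in(1/a_2,1/a_1]$, so Lemma \ref{r0} applies to $\mu_{t_1}$ at value $l$ and produces a unique preimage $r_l\in[0,r_0(t_1))$. But $r_0(t_2)$ is itself a $\mu_{t_1}$-preimage of $l$, so $r_0(t_2)=r_l<r_0(t_1)$, which is the desired strict inequality. The argument carries over verbatim to $t_2=\infty$, using $r_0(\infty)=1-a_1/a_2\in(0,1)$ and the fact that $\|v\|_\infty<\|v\|_p$ for finite $p$ whenever $v$ has at least two nonzero entries. The only substantive step is the strict monotonicity of $\|v\|_p$ in $p$, a classical few-line estimate; everything else is direct bookkeeping from Lemma \ref{r0}.
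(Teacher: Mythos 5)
Your proof is correct and is essentially the paper's argument: both rest on the strict monotonicity of $\|\cdot\|_p$ in $p$ (the paper cites H\"older) combined with Lemma \ref{r0}, the only difference being that you evaluate $\mu_{t_1}$ at $r_0(t_2)$ and use the unique-preimage clause, while the paper evaluates $\mu_{t_2}$ at $r_0(t_1)$ and uses the comparison clause. Your variant is, if anything, slightly cleaner since the value $l=\mu_{t_1}(r_0(t_2))$ lies strictly in $(\frac{1}{a_2},\frac{1}{a_1}]$, so Lemma \ref{r0} applies verbatim.
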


\begin{proof}
    Suppose $t_1,t_2\in [1,\infty]$ with $t_1<t_2$. By Holder's inequality, $$\mu_{t_2}(r_0(t_1))=\left\|\left( \frac{1-r_0(t_1)}{a_1},\frac{r_0(t_1)}{a_2}\right)\right\|_{t_2}<\left\|\left( \frac{1-r_0(t_1)}{a_1},\frac{r_0(t_1)}{a_2}\right)\right\|_{t_1}=\mu_{t_1}(r_0(t_1))=\frac{1}{a_2}=\mu_{t_2}(r_0(t_2))$$ Therefore, by lemma \ref{r0}, $r_0(t_1)>r_0(t_2)$.
\end{proof}

\begin{lemma}
    $\text{range}(r_0)=[r_0(\infty),r_0(1)]=[\frac{a_2-a_1}{a_2},1]$. 
    
    \label{IVT}
\end{lemma}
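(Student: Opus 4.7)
The plan is to evaluate $r_0$ at the endpoints $t=1$ and $t=\infty$ directly, and then leverage the strict monotonicity from Lemma \ref{decrease} together with continuity of $r_0$ to apply the Intermediate Value Theorem. First I would compute $r_0(1)$ by noting that $\mu_1(r)=\frac{1-r}{a_1}+\frac{r}{a_2}$ is linear and strictly decreasing from $\frac{1}{a_1}$ to $\frac{1}{a_2}$, so its only preimage of $\frac{1}{a_2}$ is $r=1$. Next, for $\mu_\infty(r)=\max\left(\frac{1-r}{a_1},\frac{r}{a_2}\right)$, I would check that $\frac{1-r}{a_1}>\frac{1}{a_2}$ precisely when $r<\frac{a_2-a_1}{a_2}$, so on that range $\mu_\infty(r)>\frac{1}{a_2}$; meanwhile at $r=\frac{a_2-a_1}{a_2}$ the value is exactly $\frac{1}{a_2}$, so this is the minimum preimage. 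Lemma \ref{decrease} then gives $\text{range}(r_0)\subseteq\left[\frac{a_2-a_1}{a_2},1\right]$.

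The main work is continuity of $r_0$ on $[1,\infty]$. I would use that $\mu(r,t)$ is jointly continuous on the compact rectangle $[0,1]\times[1,\infty]$ (which follows from smoothness in $r$ and the continuity in $t$ established in Lemma \ref{muprops}), so it is uniformly continuous there. Fix $t^*\in[1,\infty]$ and take any sequence $t_n\to t^*$. Since $r_0(t_n)\in[0,1]$, extract a convergent subsequence with limit $r^*$. Uniform convergence $\mu_{t_n}\to\mu_{t^*}$ on $[0,1]$, combined with $\mu_{t_n}(r_0(t_n))=\frac{1}{a_2}$, yields $\mu_{t^*}(r^*)=\frac{1}{a_2}$. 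The structure of $\mu_{t^*}$ from Lemma \ref{muprops} (strictly decreasing then strictly increasing for $t^*>1$; strictly decreasing for $t^*=1$) forces $r^*\in\{r_0(t^*),1\}$.

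To rule out $r^*=1$ when $t^*>1$: if $t_n\to t^*$ with $t_n$ eventually larger than some $t_0>1$, monotonicity gives $r_0(t_n)\leq r_0(t_0)<1$, so $r^*\leq r_0(t_0)<1$; for the other direction, if $t_n<t^*$, pick any $s$ with $1<s<t_n$ for large $n$ and note $r_0(t_n)<r_0(s)$, but this doesn't immediately suffice, so I would instead argue: if $r^*=1$ with $t^*>1$, then $\mu_{t^*}(r)$ is strictly decreasing on a neighborhood of $r=1$ (since $\mu_{t^*}'(1)=\frac{1}{a_2}>0$... wait, this is positive, so actually $\mu_{t^*}$ increases through $r=1$, meaning $1$ is the right endpoint of the increasing arm, and the preimage of $\frac{1}{a_2}$ near $1$ is isolated on the left). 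The cleaner finish: for $t^*>1$, at $r=r_0(t^*)$ we have $\mu_{t^*}$ strictly decreasing with nonzero derivative, so the Implicit Function Theorem applied to $F(r,t)=\mu_t(r)-\frac{1}{a_2}$ gives a local continuous branch matching $r_0$, and strict monotonicity of $r_0$ forces this branch to be $r_0$ itself. For the boundary cases $t^*\in\{1,\infty\}$, uniform convergence plus the fact that $\mu_1$ and $\mu_\infty$ attain $\frac{1}{a_2}$ only at the explicitly computed points handles continuity there.

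Finally, continuity and strict monotonicity of $r_0$ on the compact $[1,\infty]$ give a closed connected image containing both endpoints $1$ and $\frac{a_2-a_1}{a_2}$, hence $\text{range}(r_0)=\left[\frac{a_2-a_1}{a_2},1\right]$. The hardest step is the continuity argument, specifically excluding the spurious preimage $r^*=1$ at interior values of $t^*$; the cleanest route is the Implicit Function Theorem, but one must verify carefully that $\mu_{t^*}'(r_0(t^*))\neq 0$, which follows from $r_0(t^*)<\text{argmin}(\mu_{t^*})$ established in the proof of Lemma \ref{r0}.
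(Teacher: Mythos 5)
Your proposal takes a genuinely different route from the paper. The paper fixes a target value $r \in (r_0(\infty), 1)$ and applies the Intermediate Value Theorem to the one-variable map $t \mapsto \mu(r,t)$ on $[1,\infty]$, checking directly that $\mu(r,1) > \tfrac{1}{a_2}$ and $\mu(r,\infty) < \tfrac{1}{a_2}$, and then concludes there is some $t$ with $\mu_t(r) = \tfrac{1}{a_2}$, hence $r_0(t) = r$ (the possibility $r = 1$ being excluded by the choice $r < 1$). This surjectivity argument needs only continuity of $\mu$ in $t$, which was already noted in Lemma \ref{muprops}, and it deliberately avoids any appeal to continuity of $r_0$; in fact, in the paper's logical order, continuity of $r_0$ is derived \emph{afterward} in Corollary \ref{r0cor} from this lemma together with Lemma \ref{decrease} via the Darboux-type ``injective plus IVP implies continuous'' theorem. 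Your proposal inverts this: you first establish continuity of $r_0$ from scratch (via a compactness/subsequence argument using uniform continuity of $\mu$ on $[0,1]\times[1,\infty]$, or via the Implicit Function Theorem at $(r_0(t^*),t^*)$ using $\mu_{t^*}'(r_0(t^*)) \neq 0$), and then apply the IVT to $r_0$ itself. This is correct, but more machinery than the statement requires, and your write-up shows the cost: the step ruling out the spurious preimage $r^* = 1$ is exactly where you wobble. For the record, the single argument you give there — pick $\tau \in (1, t^*)$; then $t_n > \tau$ eventually, so $r_0(t_n) < r_0(\tau) < 1$, whence $r^* \leq r_0(\tau) < 1$ — already handles all cases and no ``other direction'' is needed, so the IFT fallback is overkill. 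The tradeoff is that your route delivers continuity of $r_0$ as a byproduct, which the paper wants later anyway; the paper's route is shorter and more elementary for this particular lemma.
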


\begin{proof}
    Fix $r\in (r_0(\infty),1)$. We have that $$\mu(r,1)=\frac{1-r}{a_1}+\frac{r}{a_2}=\frac{1}{a_1}+r\left(\frac{a_1-a_2}{a_1a_2}\right)>\frac{1}{a_1}+1\cdot \frac{a_1-a_2}{a_1a_2}=\frac{1}{a_2}$$ Additionally, since $r_0(\infty)<r<1$, by lemma \ref{r0}, $\mu(r,\infty)<\mu(r_0(\infty),\infty)=\frac{1}{a_2}$. By lemma \ref{muprops}, $\mu$ is continuous in $t$, and by \cite{lpspace}, $\lim\limits_{t\rightarrow \infty}\mu_t(r)=\mu_\infty(r)$, so by the Intermediate Value Theorem, there exists $t$ between $1,\infty$ such that $\mu(r,t)=\frac{1}{a_2}$, i.e., $r_0(t)=r$. So $(r_0(\infty),r_0(1))\subseteq r_0((1,\infty))$. By lemma \ref{decrease}, $r_0$ is decreasing, so $r_0((1,\infty))\subseteq (r_0(\infty),r_0(1))$, showing the reverse containment. This completes the proof of the first equality.

    Now, $\mu(1,1)=\frac{1}{a_2}$, and we have shown that for all $r<1$, $\mu(r,1)>\frac{1}{a_2}$. So $\mu(1)=1$.

    Also, $$\mu_\infty\left(\frac{a_2-a_1}{a_2}\right)=\left\|\left(\frac{1-\frac{a_2-a_1}{a_2}}{a_1},\frac{a_2-a_1}{a_2^2}\right)\right\|_\infty=\left\|\left(\frac{1}{a_2},\frac{a_2-a_1}{a_2^2}\right)\right\|_\infty$$ Since $\frac{a_2-a_1}{a_2^2}<\frac{a_2}{a_2^2}=\frac{1}{a_2}$, $$ \left\|\left(\frac{1}{a_2},\frac{a_2-a_1}{a_2^2}\right)\right\|_\infty=\frac{1}{a_2}$$. So $r_0(\infty)=\frac{a_2-a_1}{a_2}$, completing the proof of the second equality.


\end{proof}

\begin{corollary}

    \begin{enumerate}
        \item $r_0^{-1}(\mathbb{Q})$ and $\mathcal{T}$ are both countable.
        \item $r_0$ is continuous.
        \item $r_0^{-1}(\mathbb{Q})$ is dense in any ray $(a,\infty)$ where $a>1$.
    \end{enumerate}\label{r0cor}
\end{corollary}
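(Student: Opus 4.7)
The three claims all follow from combining the strict monotonicity of $r_0$ (Lemma \ref{decrease}) with its range description (Lemma \ref{IVT}), so my plan is to dispatch them in that order using each part as a stepping stone to the next.

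For (1), my plan is to observe that strict monotonicity forces injectivity: for each $q \in \mathbb{Q}$ there is at most one $t$ with $r_0(t) = q$, so $r_0^{-1}(\mathbb{Q})$ injects into $\mathbb{Q}$ and is therefore countable. Since $\mathcal{T} \subseteq r_0^{-1}(\mathbb{Q})$ by definition, $\mathcal{T}$ inherits countability.

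For (2), I would invoke the standard fact that a monotone function on an interval whose image is an interval must be continuous (a jump discontinuity at some $t^*$ would leave a gap in the range around $r_0(t^*)$, contradicting the interval structure of $\text{range}(r_0)$ given by Lemma \ref{IVT}). Explicitly, if $r_0$ had a jump at $t^*$, then by monotonicity the one-sided limits $\lim_{t \to t^{*-}} r_0(t)$ and $\lim_{t \to t^{*+}} r_0(t)$ would exist and strictly straddle $r_0(t^*)$, producing an interval in $[r_0(\infty), r_0(1)]$ missed by $r_0$, contradicting Lemma \ref{IVT}.

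For (3), fix $a > 1$ and a nonempty open subinterval $(t_1, t_2) \subseteq (a, \infty)$. Using continuity (part 2) and strict decreasingness, $r_0((t_1, t_2))$ is a nonempty open subinterval of $(r_0(\infty), r_0(1)) = \bigl(\tfrac{a_2 - a_1}{a_2}, 1\bigr)$. By density of $\mathbb{Q}$ in $\mathbb{R}$, this image interval contains some rational $q$, and then any $t \in r_0^{-1}(q) \cap (t_1, t_2)$ witnesses the density claim.

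None of the steps presents a real obstacle. The mildest subtlety is the continuity argument in (2), since $r_0$ is defined only by an implicit condition on $\mu_t$, but the monotonicity-plus-interval-range trick bypasses any need for a direct $\epsilon$-$\delta$ analysis. It is also worth noting that the restriction $a > 1$ in part (3) is necessary rather than cosmetic: at $t = 1$ the function $\mu_1$ equals $\tfrac{1}{a_2}$ only at $r = 1$, so $r_0$ attains the rational value $1$ there, but there is no guarantee that $r_0^{-1}(\mathbb{Q})$ accumulates at $t = 1$ from the right --- a concern the ray formulation neatly sidesteps.
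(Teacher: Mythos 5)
Your proof is correct and follows essentially the same route as the paper: part (1) via injectivity of $r_0$ from Lemma \ref{decrease}, part (2) via monotonicity plus the interval range of Lemma \ref{IVT} (the paper phrases this as ``injective $+$ IVP $\Rightarrow$ continuous''), and part (3) via density of $\mathbb{Q}$ in the range; indeed your direct argument for (3), using that the image of an open subinterval under a continuous strictly decreasing map is a nonempty open interval, is slightly more careful than the paper's blanket appeal to ``preimages of dense sets under continuous maps are dense,'' which is false in general but valid here precisely because of the monotonicity you exploit. The only inaccuracy is your closing aside: density in every ray $(a,\infty)$ with $a>1$ is equivalent to density in $(1,\infty)$ and already forces $r_0^{-1}(\mathbb{Q})$ to accumulate at $t=1$ from the right, so nothing is being ``sidestepped''; this does not affect the proof of the stated corollary.
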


    \begin{proof}
    
    \begin{enumerate}
        \item By lemma \ref{decrease}, $r_0$ is injective, so because $\mathbb{Q}$ is countable, $r_0^{-1}(\mathbb{Q})$ is countable. Since $\mathcal{T}\subseteq r_0^{-1}(\mathbb{Q})$, $\mathcal{T}$ is countable as well.
        \item By lemmas \ref{decrease} and \ref{IVT}, $r_0$ satisfies the IVP. By lemma \ref{decrease}, $r_0$ is injective. By \cite{Rudin}, any injective function satisfying the IVP is continuous.
        \item We know $\mathbb{Q}\cap [r_0(\infty),1]$ is dense in $[r_0(\infty),1]=[r_0(\infty),r_0(1)]$, which, by lemma \ref{decrease}, is the range of $r_0$. So because preimages under continuous functions of dense sets are dense \cite{munkres}, $r_0^{-1}(\mathbb{Q})$ is dense in $(1,\infty)$, meaning it is dense in any ray $(a,\infty)$ where $a>1$.
    \end{enumerate}
    \end{proof}

    It remains to locate $T$ where $|a_1a_2P(t)|>a_1$ if $t\in (T,\infty)$. We do so by using $\mu'_\infty(r_0(t))$ to approximate.

    \begin{lemma}
    If $0<r<\frac{a_2}{a_1+a_2}$, then $\lim\limits_{t\rightarrow \infty} \mu_t'(r)=-\frac{1}{a_1}$.\label{limlemma}
    \end{lemma}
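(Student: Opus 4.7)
The plan is to exploit the closed-form expression for $\mu_t'(r)$ already derived in equation \ref{mutprime} in the proof of Lemma \ref{muprops} and to show that, under the hypothesis, every factor in that expression has a clean limit as $t\to\infty$. Concretely, setting $\rho := \frac{ra_1}{(1-r)a_2}$, equation \ref{mutprime} gives
$$\mu_t'(r) = \mu_t(r)\cdot\frac{a_1}{1-r}\cdot\frac{-\frac{1}{a_1}+\frac{1}{a_2}\rho^{t-1}}{1+\rho^t}.$$
The whole argument will ride on the observation that the hypothesis $0<r<\frac{a_2}{a_1+a_2}$ rearranges to $ra_1<(1-r)a_2$, which is exactly $\rho<1$ (equivalently, $\frac{r}{a_2}<\frac{1-r}{a_1}$).

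First I would record this equivalence. With $\rho<1$ in hand, $\rho^{t-1}\to 0$ and $\rho^t\to 0$ as $t\to\infty$, so the numerator of the final fraction converges to $-\frac{1}{a_1}$ and the denominator to $1$. Next I would handle the $\mu_t(r)$ factor: since the $l_p$ norm of a fixed vector tends to its $l_\infty$ norm as $p\to\infty$ \cite{lpspace}, and the hypothesis guarantees $\frac{1-r}{a_1}$ is the strictly larger coordinate, we get $\mu_t(r)\to\frac{1-r}{a_1}$. The middle factor $\frac{a_1}{1-r}$ does not depend on $t$. Multiplying the three limits yields
$$\lim_{t\to\infty}\mu_t'(r)=\frac{1-r}{a_1}\cdot\frac{a_1}{1-r}\cdot\left(-\frac{1}{a_1}\right)=-\frac{1}{a_1}.$$

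There is no real obstacle here: because $\rho<1$ strictly, none of the limits are indeterminate, and the product rule for limits of continuous real sequences applies directly. The only mildly delicate point is verifying the algebraic equivalence between the threshold $r<\frac{a_2}{a_1+a_2}$ and the geometric statement $\rho<1$ (equivalently, that $\frac{1-r}{a_1}$ dominates $\frac{r}{a_2}$); once that is noted, the remainder of the argument is termwise limit-taking in a fixed rational expression.
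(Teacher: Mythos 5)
Your proposal is correct and follows essentially the same route as the paper: both start from the factored form of $\mu_t'(r)$ in equation \ref{mutprime}, use $r<\frac{a_2}{a_1+a_2}$ to get $\frac{ra_1}{(1-r)a_2}<1$ so the powers vanish as $t\to\infty$, and use $\mu_t(r)\to\mu_\infty(r)=\frac{1-r}{a_1}$ to conclude the product of limits is $-\frac{1}{a_1}$. No substantive differences.
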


\begin{proof}
    Fix $r\in (0,\frac{a_2}{a_1+a_2})$ and consider $\lim\limits_{t\rightarrow \infty} \mu_t'(r)$. As shown in proposition \ref{muprops} (equation \ref{mutprime}),

         \begin{equation}
         \mu_t'(r)=\frac{\mu_t(r)\left(\frac{1-r}{a_1}\right)^{t-1}\left(\frac{-1}{a_1}+\frac{1}{a_2}\left(\frac{ra_1}{(1-r)a_2}\right)^{t-1}\right)}{\left(\frac{1-r}{a_1}\right)^t\left(1+\left(\frac{ra_1}{(1-r)a_2}\right)^t\right)}=\mu_t(r)\left(\frac{a_1}{1-r}\right)\left(\frac{\frac{-1}{a_1}+\frac{1}{a_2}\left(\frac{ra_1}{(1-r)a_2}\right)^{t-1}}{1+\left(\frac{ra_1}{(1-r)a_2}\right)^t}\right)\end{equation}

    Since $r<\frac{a_2}{a_1+a_2}$, we have $\frac{ra_1}{(1-r)a_2}<1$. Also, $\lim\limits_{t\rightarrow \infty}\mu_t(r)=\mu_\infty(r)$ \cite{lpspace}. So the limit of the above expression is $$\mu_\infty(r)\left(\frac{1-r}{a_1}\right)^{-1}\frac{-\frac{1}{a_1}+0}{1+0}=\mu_\infty(r)\frac{a_1}{1-r}(-\frac{1}{a_1})$$

    Now, $\frac{ra_1}{(1-r)a_2}<1$ implies $\frac{r}{a_2}<\frac{1-r}{a_1}$, meaning $\mu_\infty(r)=\left\|\left(\frac{1-r}{a_1},\frac{r}{a_2}\right)\right\|_\infty=\max\left(\frac{1-r}{a_1},\frac{r}{a_2}\right)\frac{1-r}{a_1}$. So $$\lim\limits_{t\rightarrow \infty} \mu_t'(r)=\mu_\infty(r)\frac{a_1}{1-r}(-\frac{1}{a_1})=-\frac{1}{a_1}$$
    
\end{proof}

\begin{corollary}
    For all $r\in(0,\frac{a_2}{a_1+a_2})$, there exists $T$ such that if $t\geq T$, then $\mu_t'(r)<-\frac{1}{a_2}$.\label{limcor}
\end{corollary}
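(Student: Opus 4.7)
The plan is to derive this corollary as an immediate consequence of Lemma \ref{limlemma} together with the basic observation that $a_1<a_2$. Since the irreducibles generating the numerical semigroup $\langle a_1,a_2\rangle$ are listed in increasing order and are both positive, $-\frac{1}{a_1}<-\frac{1}{a_2}$. Setting $\epsilon_0 := \frac{1}{a_1}-\frac{1}{a_2} = \frac{a_2-a_1}{a_1a_2}$ quantifies the gap between the two values: $\epsilon_0>0$.

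Lemma \ref{limlemma} asserts that $\lim_{t\to\infty}\mu_t'(r) = -\frac{1}{a_1}$ for every $r\in(0,\frac{a_2}{a_1+a_2})$. Applying the definition of a limit with, say, the fixed tolerance $\epsilon_0/2$, I would produce some $T$ (depending on $r$) such that $|\mu_t'(r)+\frac{1}{a_1}|<\epsilon_0/2$ for every $t\geq T$. This yields $\mu_t'(r) < -\frac{1}{a_1}+\frac{\epsilon_0}{2} = -\frac{1}{a_2}-\frac{\epsilon_0}{2} < -\frac{1}{a_2}$, which is the desired conclusion.

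No genuine obstacle arises; all the analytic work sits inside Lemma \ref{limlemma}, and the only new ingredient is the strict inequality $a_1<a_2$ that provides the margin $\epsilon_0$ needed to pass from ``close to $-\frac{1}{a_1}$'' to ``strictly less than $-\frac{1}{a_2}$''.
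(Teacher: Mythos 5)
Your proof is correct and takes essentially the same approach as the paper, which simply cites Lemma \ref{limlemma} and the inequality $-\frac{1}{a_1}<-\frac{1}{a_2}$. You have merely spelled out the routine $\epsilon$-argument that the paper leaves implicit.
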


\begin{proof}
    This is true by lemma \ref{limlemma} and because $-\frac{1}{a_1}<-\frac{1}{a_2}$
\end{proof}

This naturally implies the existence of the ray.

\begin{proposition}
    There exists $T$ such that for all $t\in (T,\infty)$, $|\mu_t(r_0(t))|>\frac{1}{a_2}$. Thus, $\mathcal{T}$ is dense in some ray $(T,\infty)$. \label{bigTprop}
\end{proposition}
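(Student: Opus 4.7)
The plan is to reduce the claim $|P(t)|>\tfrac{1}{a_2}$ (where $P(t)=\mu_t'(r_0(t))$) to Corollary \ref{limcor} by pinching $r_0(t)$ inside the interval $(0,\tfrac{a_2}{a_1+a_2})$ for all sufficiently large $t$, and then passing from a pointwise statement to a statement at $r_0(t)$ via the monotonicity of $\mu_t'$ in $r$. Density of $\mathcal{T}$ on the resulting ray then follows immediately from Corollary \ref{r0cor}(3).

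First I would note the numerical inequality $r_0(\infty)=\tfrac{a_2-a_1}{a_2}<\tfrac{a_2}{a_1+a_2}$, which reduces to $-a_1^2<0$ after clearing denominators. So I can choose some rational $r^{\ast}$ with $r_0(\infty)<r^{\ast}<\tfrac{a_2}{a_1+a_2}$. By Lemma \ref{decrease} and Lemma \ref{IVT}, $r_0$ decreases from $r_0(1)=1$ down to $r_0(\infty)=\tfrac{a_2-a_1}{a_2}$, and by continuity (Corollary \ref{r0cor}(2)) there exists $T_0$ with $r_0(t)\in(r_0(\infty),r^{\ast})$ for all $t>T_0$.

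Next, since $r^{\ast}\in(0,\tfrac{a_2}{a_1+a_2})$, Corollary \ref{limcor} supplies $T_1$ such that $\mu_t'(r^{\ast})<-\tfrac{1}{a_2}$ whenever $t\geq T_1$. Set $T:=\max(T_0,T_1)$. For $t>T$, we have $r_0(t)<r^{\ast}$, and by Lemma \ref{muprops}(5) the function $\mu_t'$ is strictly increasing on $[0,1]$, so
\[
P(t)\;=\;\mu_t'(r_0(t))\;<\;\mu_t'(r^{\ast})\;<\;-\tfrac{1}{a_2}.
\]
In particular $P(t)<0$, hence $|P(t)|>\tfrac{1}{a_2}$, which is the first conclusion. (Note: I read the proposition statement as $|\mu_t'(r_0(t))|>\tfrac{1}{a_2}$, i.e.\ $|P(t)|>\tfrac{1}{a_2}$; the statement as typeset appears to be a typo since $\mu_t(r_0(t))=\tfrac{1}{a_2}$ by definition of $r_0$.)

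For the density claim, observe that for $t\in(T,\infty)$ the condition $|P(t)|>\tfrac{1}{a_2}$ is automatic by the previous paragraph, so on this ray $\mathcal{T}$ coincides with $r_0^{-1}(\mathbb{Q})\cap(T,\infty)$. Since $T>1$, Corollary \ref{r0cor}(3) tells us that $r_0^{-1}(\mathbb{Q})$ is dense in $(T,\infty)$, and hence so is $\mathcal{T}$. I do not anticipate a serious obstacle; the only subtle point is justifying why monotonicity of $\mu_t'$ in $r$ lets us transfer a bound at the fixed comparison point $r^{\ast}$ to the moving point $r_0(t)$, which is what forced the use of $r^{\ast}$ strictly between $r_0(\infty)$ and the Lemma \ref{limlemma} threshold.
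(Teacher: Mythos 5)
Your argument is correct and follows essentially the same route as the paper: fix a comparison point in $\left(r_0(\infty),\tfrac{a_2}{a_1+a_2}\right)$ (the paper uses $r_0(T_1)$ where you use an arbitrary $r^{\ast}$, an inessential difference), apply Corollary \ref{limcor} there, transfer the bound to $r_0(t)$ via the monotonicity of $\mu_t'$ in $r$ together with the decrease of $r_0$, and conclude density from Corollary \ref{r0cor}. Your reading of the statement as $|P(t)|=|\mu_t'(r_0(t))|>\tfrac{1}{a_2}$ is also the intended one, matching what the paper actually proves.
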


\begin{proof}
    We have that $$\mu_\infty\left(\frac{a_2-a_1}{a_2}\right)=\left\|\left(\frac{1-\frac{a_2-a_1}{a_2}}{a_1},\frac{a_2-a_1}{a_2^2}\right)\right\|_\infty=\left\|\left(\frac{1}{a_2},\frac{a_2-a_1}{a_2^2}\right)\right\|_\infty$$ Since $\frac{a_2-a_1}{a_2^2}<\frac{a_2}{a_2^2}=\frac{1}{a_2}$, we have $ \left\|\left(\frac{1}{a_2},\frac{a_2-a_1}{a_2^2}\right)\right\|_\infty=\frac{1}{a_2}$. So $r_0(\infty)=\frac{a_2-a_1}{a_2}$.

    Now, because $(a_2-a_1)(a_2+a_1)=a_2^2-a_1^2<a_2^2$, $r_0(\infty)=\frac{a_2-a_1}{a_2}<\frac{a_2}{a_1+a_2}$. So by lemma \ref{decrease} and part 2 of corollary \ref{r0cor}, there exists $T_1$ such that if $t\geq T_1$, $r_0(t)<\frac{a_2}{a_1+a_2}$. Let $r:=r_0(T_1)$. By corollary \ref{limcor}, there exists $T_2$ such that if $t\geq T_2$, $\mu_t'(r)<-\frac{1}{a_2}$. 
    
    Suppose $t>\max(T_1,T_2)$. By lemma \ref{decrease}, $r_0$ is decreasing in $t$, so $r_0(t)<r_0(T_1)=r$. By lemma \ref{muprops}, $\mu_t'$ is increasing in $r$, so $P(t)<\mu_t'(r)$. As $t>T_2$, $\mu_t'(r)<-\frac{1}{a_2}$. Therefore, if $t\in (T,\infty)$, then $|P(t)|>\frac{1}{a_2}$. By Corollary \ref{r0cor}, $r_0^{-1}(\mathbb{Q})$ is dense in $(T,\infty)$, so $\mathcal{T}$ is dense in $(T,\infty)$.
\end{proof}

\section{Conclusion}

We conclude with the main theorem and some closing remarks on open work.

\begin{theorem}
    For all $S:=\langle a_1,a_2\rangle$, $\mathcal{P}\subseteq r_0^{-1}(\mathbb{Q})$. It is countable, but dense in some ray $(T,\infty)$.\label{main}
\end{theorem}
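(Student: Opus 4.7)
The plan is to consolidate the three previous Propositions into the three claims: containment, countability, and density.

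For containment, I would prove the contrapositive: if $r_0(t)$ is irrational, then $t$ is dull. By Proposition \ref{denselow}, irrationality of $r_0(t)$ gives density of $\Delta_t(S)$ in $[0,|a_1a_2P(t)|]$; by Proposition \ref{densehigh}, we always have density in $[|a_1a_2P(t)|,a_2]$. Taking the union of these two closed subintervals recovers all of $[0,a_2]$, so $\Delta_t(S)$ is dense in $[0,a_2]$ and $t$ is dull. Therefore $t\in\mathcal{P}$ forces $r_0(t)\in\mathbb{Q}$, i.e. $\mathcal{P}\subseteq r_0^{-1}(\mathbb{Q})$.

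Countability follows immediately: part (1) of Corollary \ref{r0cor} states that $r_0^{-1}(\mathbb{Q})$ is countable, and since $\mathcal{P}$ is a subset, it is countable as well.

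For the density claim, the idea is to show $\mathcal{T}\subseteq\mathcal{P}$ and then apply Proposition \ref{bigTprop}. If $t\in\mathcal{T}$, then $r_0(t)\in\mathbb{Q}$ and $|P(t)|>\tfrac{1}{a_2}$, which rearranges to $|a_1a_2P(t)|>a_1$; these are exactly the hypotheses of Proposition \ref{edgecase}, so $t\in\mathcal{P}$. Now Proposition \ref{bigTprop} provides $T$ such that $\mathcal{T}$ is dense in $(T,\infty)$, and since $\mathcal{T}\subseteq\mathcal{P}$, the set $\mathcal{P}$ inherits density in that ray.

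There is no real obstacle here; the only subtlety is noticing that the two density intervals in Propositions \ref{denselow} and \ref{densehigh} overlap at the endpoint $|a_1a_2P(t)|$ so their union is the whole interval $[0,a_2]$, and that the numerical condition defining $\mathcal{T}$ is exactly the hypothesis needed to invoke Proposition \ref{edgecase}. All three parts of the theorem then follow as direct corollaries of the preceding sections.
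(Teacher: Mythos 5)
Your proposal is correct and follows essentially the same route as the paper: Propositions \ref{denselow} and \ref{densehigh} give the containment $\mathcal{P}\subseteq r_0^{-1}(\mathbb{Q})$, Corollary \ref{r0cor} gives countability, and Proposition \ref{edgecase} together with Proposition \ref{bigTprop} gives density of $\mathcal{T}\subseteq\mathcal{P}$ in a ray $(T,\infty)$. Your explicit observation that $|P(t)|>\tfrac{1}{a_2}$ rearranges to $|a_1a_2P(t)|>a_1$, matching the hypothesis of Proposition \ref{edgecase}, is a small but welcome clarification the paper leaves implicit.
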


\begin{proof}
    By propositions \ref{denselow} and \ref{densehigh}, if $r_0(t)\not\in \mathbb{Q}$, then $\Delta_t(S)$ is dense in $(0,|a_1a_2P(t)|)$ and $(|a_1a_2P(t)|,a_2)$, meaning $\Delta_t(S)$ is dense in $[0,a_2]$. So $\mathcal{P}\subseteq r_0^{-1}(\mathbb{Q})$. By corollary \ref{r0cor}, $r_0^{-1}(\mathbb{Q})$ is countable, so $\mathcal{P}$ is countable.
    
Furthermore, by proposition \ref{edgecase}, $$\mathcal{T}:=\left\{t:r_0(t)\in \mathbb{Q};~|P(t)|\geq \frac{1}{a_2}\right\}\subseteq \mathcal{P}$$. By proposition \ref{bigTprop}, $\mathcal{T}$ is dense in some ray $(T,\infty)$. So $\mathcal{P}$ is dense in some ray $(T,\infty)$.
\end{proof}

This theorem handles some notable special cases.

\begin{corollary}
    If $t\in (2,\infty)\cap \mathbb{Q}$, then $t$ is dull.\label{FLT}
\end{corollary}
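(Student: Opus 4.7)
The plan is to combine Theorem \ref{main} with Fermat's Last Theorem (FLT). By Theorem \ref{main}, $\mathcal{P}\subseteq r_0^{-1}(\mathbb{Q})$, so it suffices to show that if $t\in\mathbb{Q}\cap(2,\infty)$, then $r_0(t)$ is irrational. Suppose for contradiction that $r_0(t)=p/q\in\mathbb{Q}$ in lowest terms, with $0<p<q$ (the latter follows from Lemma \ref{IVT}). Unpacking $\mu_t(p/q)=1/a_2$ and multiplying through by $(qa_1a_2)^t$ produces the Fermat-like equation
\[(a_2(q-p))^t+(a_1p)^t=(a_1q)^t\]
with positive integer bases and rational exponent $t>2$.

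Next I would write $t=m/n$ in lowest terms. Since $t>2$ and $n\geq 1$, we have $m>2n\geq 2$, so $m\geq 3$. The goal is then to reduce any purported solution $X^{m/n}+Y^{m/n}=Z^{m/n}$ in positive integers $X,Y,Z$ to a genuine Fermat equation $x^m+y^m=z^m$ in positive integers $x,y,z$, which is excluded by FLT since $m\geq 3$.

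For the reduction, I would factor $X=x^n X_0$ with $X_0$ an $n$-free positive integer, and similarly $Y=y^n Y_0$, $Z=z^n Z_0$. Then $X^{m/n}=x^m X_0^{m/n}$, and because $\gcd(m,n)=1$ one can rewrite $X_0^{m/n}=\beta_X (X_0')^{1/n}$ where $\beta_X$ is a positive integer and $X_0'$ is the $n$-free positive integer obtained by reducing the exponents in the prime factorization of $X_0^m$ modulo $n$. The equation then becomes a $\mathbb{Q}$-linear relation among $(X_0')^{1/n}$, $(Y_0')^{1/n}$, and $(Z_0')^{1/n}$ with positive integer coefficients. The classical fact that $n$-th roots of pairwise distinct $n$-free positive integers are $\mathbb{Q}$-linearly independent (a consequence of the degree formula $[\mathbb{Q}(p_1^{1/n},\ldots,p_k^{1/n}):\mathbb{Q}]=n^k$ for distinct primes $p_i$) forces $X_0'=Y_0'=Z_0'$. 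Because the map $X_0\mapsto X_0'$ is a bijection on $n$-free positive integers (again using $\gcd(m,n)=1$), this gives $X_0=Y_0=Z_0=:D$. Dividing the original equation through by $D^{m/n}$ then produces $x^m+y^m=z^m$, contradicting FLT.

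The hard part will be cleanly justifying the linear independence of $n$-th roots of distinct $n$-free positive integers. Although this is classical, following from the Kummer-theoretic fact that $\mathbb{Q}(p_1^{1/n},\ldots,p_k^{1/n})$ has full degree $n^k$ over $\mathbb{Q}$ for distinct primes $p_i$, it would need either a citation or a brief self-contained derivation to make the argument airtight. The bijectivity of $X_0\mapsto X_0'$ and the routine case analysis on whether the $n$-free parts coincide, together with the eventual appeal to FLT, are straightforward by comparison.
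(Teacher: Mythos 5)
Your proposal is correct and takes essentially the same route as the paper: reduce to showing $r_0(t)\notin\mathbb{Q}$ (so that Theorem \ref{main}, via Propositions \ref{denselow} and \ref{densehigh}, gives dullness) and rule out a rational $r_0(t)=p/q$ by clearing denominators in $\mu_t(p/q)=\tfrac{1}{a_2}$ to obtain a Fermat-type equation with exponent $t>2$. The only difference is that the paper merely cites the rational-exponent extension of Fermat's Last Theorem, whereas you supply a proof of it (writing $t=m/n$ and using linear independence over $\mathbb{Q}$ of $n$-th roots of distinct $n$-free integers to descend to an integer-exponent Fermat equation with $m\geq 3$), which is a sound filling-in of that cited step.
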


\begin{proof}
    An elementary extension of Fermat's Last Theorem \cite{FLT} shows that there are no rational solutions to $x^t+y^y=z^t$ with $x,y,z$ rational and $t\in (2,\infty)\cap \mathbb{Q}$. Therefore, if $t\in (2,\infty)\cap \mathbb{Q}$, then $\left(\frac{1-r}{a_1}\right)^t+\left(\frac{r}{a_2}\right)^t=\left(\frac{1}{a_2}\right)^t$ only if $\frac{1-r}{a_1}$ or $\frac{r}{a_2}$ equal $0$. This means $r=1$ or $0$; by inspection, $r=1$ is the only value that works. But by lemma \ref{r0}, $r_0(t)<r_0(1)=1$, so $r_0(t)$ cannot be rational.
\end{proof}

It is, however, possible to engineer some irrational perspicacious examples. We will compute the gaps in the $t$-delta set induced by such an example.

\begin{example}

    We first show that in a general numerical semigroup $\langle a_1,a_2\rangle$, $r_0(log_{\frac{a_2}{a_1+a_2}}\left(\frac{1}{2}\right)=\left(\frac{a_2}{a_1+a_2}\right)$. Let $t_0:=log_{\frac{a_2}{a_1+a_2}}\left(\frac{1}{2}\right)$. Then $$\mu_{t_0}\left(\frac{a_2}{a_1+a_2}\right)=\left(\frac{1}{a_2}\right)\cdot\left( \left(\frac{a_2}{a_1}\left(1-\frac{a_2}{a_1+a_2}\right)\right)^{t_0}+\left(\frac{a_2}{a_1+a_2}\right)^{t_0}\right)^{1/t_0}=\left(\frac{1}{a_2}\right)\left(2\left(\frac{a_2}{a_1+a_2}\right)^{t_0}\right)^{1/t_0} $$ $$=\left(\frac{1}{a_2}\right)\left(\frac{a_2}{a_1+a_2}\right)\left(2\right)^{1/t_0}=\left(\frac{1}{a_2}\right)\left(\frac{a_2}{a_1+a_2}\right)\left(\frac{a_2}{a_1+a_2}\right)^{-1}=\frac{1}{a_2}$$ as needed.
    
    In particular, setting $S:=\langle 2,7\rangle$ and $t=log_{\frac{7}{9}}\left(\frac{1}{2}\right)$ gives $r_0(t)=\frac{7}{9}$. By inspection, $P(t)\approx -0.2298<-\frac{1}{7}$, so $a_1a_2\left|P(t)\right|>a_1$. As $r_0(t)$ is rational as well, by proposition \ref{edgecase}, $t$ is perspicacious. 

    By proposition \ref{edgecase}, if $d\in (2,|14P(t)|)-\Lambda$, where $\Lambda:=\left\{\frac{\lambda_1\mu_t'(r_0)}{9}+\frac{\lambda_2}{7}:\lambda_1,\lambda_2\in \mathbb{N}\right\}$, then $d$ is not a limit point of $\Delta_t(S)$. (Note that $14P(t)\approx 3.2172$, so this set is nonempty.) A relatively large gap in $\Lambda\cap (2,|14P(t)|)$ occurs between $\frac{58P(t)}{9}+\frac{4}{7}$ and $\frac{19P(t)}{9}+\frac{11}{7}$; accordingly, let $$d:=\frac{1}{2}\left(\left(\frac{58P(t)}{9}+\frac{4}{7}\right)+\left(\frac{19P(t)}{9}+\frac{11}{7}\right)\right)\approx 2.05446$$ Following the proof of proposition \ref{edgecase}, let $\epsilon$ be one fourth of the distance from $d$ to the closest of $2,|14P(t)|,\Lambda$, which, in this case, is $$\epsilon:=\frac{1}{4}\left( d-\left(\frac{58P(t)}{9}+\frac{4}{7}\right) \right)\approx0.0005250$$. According to the proof of proposition \ref{edgecase}, if $x$ is big enough that $\frac{k_t(2\cdot 7)^2}{x}<\epsilon$ (i.e., $\frac{196k_t}{\epsilon}<x$), then $\Delta_t(x)$ is disjoint to $(d-\epsilon,d+\epsilon)$. By \cite{Rudin}, $k_t\leq \left|\frac{\sup\{\mu''_t([0,1])\}}{2}\right|$. Numerical approximations bound this expression above by $1.5$, meaning if $x>\frac{196\cdot 1.5}{\epsilon}$ (i.e., $x>560001$), then $\Delta_t(x)$ is disjoint to $(d-\epsilon,d+\epsilon)$. Brute force computation of $\Delta_t(x)$ for $x\in [0,560001]$ shows that $\Delta_t(S)$ is completely disjoint to $(d-\epsilon,d+\epsilon)$, an interval with width approximately $0.00105$.
\end{example}

\begin{remark}
    We have shown that there is $T$ where $r_0^{-1}(\mathbb{Q})\cap (T,\infty)\subseteq \mathcal{T}\subseteq \mathcal{P}\subseteq r_0^{-1}(\mathbb{Q})\cap (1,\infty)$, but have not computed $T$ or shown whether any of these containments are strict. We also have no knowledge of $r_0^{-1}(\mathbb{Q})$ beyond its density in $(1,\infty)$, as we have no closed form for $r_0$ or $r_0^{-1}$. As such, though we know that almost all $t\in (1,\infty)$ are dull, (including the special in corollary \ref{FLT}), we cannot, in general, determine whether any particular $t$ is dull or perspicacious. 

    Notably, some algebraic manipulation reveals that for all $S$, $r_0(2)=\frac{a_2^2-a_1^2}{a_2^2+a_1^2}\in \mathbb{Q}$, and that $|\mu_2'(r_0(2))|=\frac{1}{a_2}$ exactly. Therefore, $2\not\in \mathcal{T}$, but $2\in r_0^{-1}(\mathbb{Q})$, so our results cannot determine the perspicacity of $2$.

    We also remark that experimental evidence suggests that for all $S$, $P(t)$ is increasing in $t$ on all of $(1,\infty)$. This would imply that $|P(t)|>\frac{1}{a_2}$ iff $t>2$, and thus that $\mathcal{T}=r_0^{-1}(\mathbb{Q})\cap (2,\infty)$.

    Furthermore, we suspect that $\mathcal{P}=r_0^{-1}(\mathbb{Q})$, i.e., that $t$ is perspicacious if $r_0(t)\in \mathbb{Q}$, no matter whether $|a_1a_2P(t)|>a_1$ (whether $t\in \mathcal{T}$).

    Finally, aside from some limited preliminary results by Cyrusian et. al., \cite{Ducks} the question of the limit points of $\Delta_t(S)$ for $t\in (1,\infty)$ and $S$ with higher embedding dimension remain unknown.
\end{remark}

\section*{Acknowledgements}

This work was done in extension of work begun at the 2023 SDSU Mathematics REU, supported by the National Science Foundation under Grant 1851542. Any opinions, findings, and conclusions or recommendations expressed in this material are those of the author(s) and do not necessarily reflect the views of the National Science Foundation.

\section*{References}
\printbibliography[heading=none]

@techreport{ateam,
    author = {Behera, K. and Combes, R. and Howard, J. and O'Neill, C. and Perry, S. and Ponomarenko, V. and Worms, B.},
    title = {On the t-Elasticity of Numerical Semigroups},
    institution = {San Diego State University},
    year = 2023,
}

@book {munkres,
    AUTHOR = {Munkres, James R.},
     TITLE = {Topology},
   EDITION = {Second},
 PUBLISHER = {Prentice Hall, Inc., Upper Saddle River, NJ},
      YEAR = {2000},
     PAGES = {xvi+537},
      ISBN = {0-13-181629-2},
   MRCLASS = {54-01},
  MRNUMBER = {3728284},
}

@article {kaplan_2017_delta,
    AUTHOR = {Colton, Stefan and Kaplan, Nathan},
     TITLE = {The realization problem for delta sets of numerical
              semigroups},
   JOURNAL = {J. Commut. Algebra},
  FJOURNAL = {Journal of Commutative Algebra},
    VOLUME = {9},
      YEAR = {2017},
    NUMBER = {3},
     PAGES = {313--339},
      ISSN = {1939-0807,1939-2346},
   MRCLASS = {20M13 (11B75 20M14)},
  MRNUMBER = {3685046},
MRREVIEWER = {Lance\ Eric\ Bryant},
       DOI = {10.1216/JCA-2017-9-3-313},
       URL = {https://doi.org/10.1216/JCA-2017-9-3-313},
}

@book {bible,
    AUTHOR = {Geroldinger, Alfred and Halter-Koch, Franz},
     TITLE = {Non-unique factorizations},
    SERIES = {Pure and Applied Mathematics (Boca Raton)},
    VOLUME = {278},
      NOTE = {Algebraic, combinatorial and analytic theory},
 PUBLISHER = {Chapman \& Hall/CRC, Boca Raton, FL},
      YEAR = {2006},
     PAGES = {xxii+700},
      ISBN = {978-1-58488-576-4; 1-58488-576-9},
   MRCLASS = {20-02 (11-02 13-02)},
  MRNUMBER = {2194494},
MRREVIEWER = {W.\ Narkiewicz},
       DOI = {10.1201/9781420003208},
       URL = {https://doi.org/10.1201/9781420003208},
}

@book {Rudin,
    AUTHOR = {Rudin, Walter},
     TITLE = {Real and complex analysis},
   EDITION = {Third},
 PUBLISHER = {McGraw-Hill Book Co., New York},
      YEAR = {1987},
     PAGES = {xiv+416},
      ISBN = {0-07-054234-1},
   MRCLASS = {00A05 (26-01 30-01 46-01)},
  MRNUMBER = {924157},
}

@article {FLT,
    AUTHOR = {Morgan, Frank},
     TITLE = {Fermat's last theorem for fractional and irrational exponents},
   JOURNAL = {College Math. J.},
  FJOURNAL = {The College Mathematics Journal},
    VOLUME = {41},
      YEAR = {2010},
    NUMBER = {3},
     PAGES = {182--185},
      ISSN = {0746-8342,1931-1346},
   MRCLASS = {11D41},
  MRNUMBER = {2656314},
       DOI = {10.4169/074683410X488647},
       URL = {https://doi.org/10.4169/074683410X488647},
}

@techreport{Ducks,
    author = {Cyrusian, S. and Domat, A. and Ren, E. and Ponomarenko, V. and O'Neill, C. and Ward, M.},
    title = {Generalized Delta Sets of Numerical Semigroups},
    institution = {San Diego State University},
    year = 2023
}

@book {lpspace,
    AUTHOR = {Vaidyanathan, Prahlad},
     TITLE = {Functional analysis},
 PUBLISHER = {Cambridge University Press, Cambridge},
      YEAR = {2023},
     PAGES = {xii+543},
      ISBN = {978-1-009-24390-2},
   MRCLASS = {46-01 (47-01)},
  MRNUMBER = {4616425},
}

\end{document}